\documentclass[reqno, 12pt]{amsart}

\usepackage[linktocpage,colorlinks,linkcolor=magenta,citecolor=magenta,urlcolor=black,pagebackref=false]{hyperref}

\usepackage{amsmath,amssymb,amsthm}
\usepackage[T1]{fontenc}
\usepackage{lmodern}
\usepackage[noEnd=false]{algpseudocodex}
\usepackage{enumitem}
\usepackage[all]{xy}
\usepackage{microtype}
\usepackage{xcolor}
\usepackage{cleveref}  

\let\oldphi\phi
\let\phi\varphi
\let\varphi\oldphi
\let\oldepsilon\epsilon
\let\epsilon\varepsilon
\let\varepsilon\oldepsilon

\newcommand{\fm}{\mathfrak{m}}

\newcommand{\mbb}[1]{\ensuremath{\mathbb{#1}}}
\newcommand{\Z}{\mbb{Z}}

\newcommand{\PP}{\mbb{P}}
\newcommand{\Q}{\mbb{Q}}

\newcommand{\mbf}[1]{\ensuremath{\mathbf{#1}}}

\newcommand{\mcal}[1]{\ensuremath{\mathcal{#1}}}

\newcommand{\Fc}{\mcal{F}}

\newcommand{\tn}[1]{\textnormal{#1}}

\let\ker\relax\newcommand{\ker}{\tn{ker}}

\DeclareMathOperator{\depth}{\tn{depth}}

\DeclareMathOperator{\ini}{\tn{in}}
\DeclareMathOperator{\Proj}{Proj} 
\DeclareMathOperator{\reg}{reg}
\newtheorem{thm}{Theorem}[section]
\newtheorem{lemma}[thm]{Lemma}
\newtheorem{cor}[thm]{Corollary}
\newtheorem{prop}[thm]{Proposition}

\newtheorem{propdef}[thm]{Proposition and Definition}

\theoremstyle{definition}
\newtheorem{definition}[thm]{Definition}

\newtheorem{remark}[thm]{Remark}
\newtheorem{example}[thm]{Example}
\newtheorem{alg}[thm]{Algorithm}

\newtheorem*{thm*}{Theorem}

\numberwithin{equation}{section}
\algrenewcommand\algorithmicrequire{\textbf{Input:}}
\algrenewcommand\algorithmicensure{\textbf{Output:}}

\title{Macaulay Constants and Vanishing of Cohomology}

\author{Uwe Nagel}
\address{Department of Mathematics, University of Kentucky, 715 Patterson Office Tower, Lexington,
KY 40506,  USA}
\email{uwe.nagel@uky.edu}

\thanks{The author was partially supported by Simons Foundation grant \#636513. He thanks Kyle Maddox for helpful comments. }

\begin{document}

\begin{abstract}
Dub\'e introduced cone decompositions and their Macaulay constants and used them to obtain an upper bound on the degrees of the generators in a Gr\"obner basis of an ideal. Liang extended the theory to submodules of a free module. 
In this paper, Macaulay constants of any finitely generated graded module $M$ over a polynomial ring are introduced by adapting the concept of a cone decomposition to $M$. 
It is shown that these constants provide upper bounds for the degrees in which the local cohomology modules of $M$ are not zero. The results include an upper bound on the Castelnuovo-Mumford regularity of $M$ and a generalization of Gotzmann's Regularity Theorem from ideals to modules. As an application, an upper bound on the  Castelnuovo-Mumford regularity of any coherent sheaf on projective space is established. The mentioned bounds are sharp even for cyclic modules. Furthermore,  Macaulay constants are utilized to provide a characterization of Hilbert polynomials of finitely generated graded modules.  
\end{abstract}

\maketitle

\tableofcontents


\section{Introduction}
\label{section:intro}

In \cite{Dube}, Dub\'e introduced cone decompositions in order to bound the degree of polynomials appearing in reduced Gr\"obner bases of a polynomial ideal and expressed the hope that these decompositions have other applications as well. A goal of this paper is to provide further instances for the usefulness of cone decompositions. 

Consider any  finitely generated graded module $F$ over a  polynomial ring $S = K[X]$ over any field $K$ with finite variable set $X$ and standard grading, i.e., all the variables of $S$ have degree one. A \emph{cone} $C$ in $F$ is a graded $K$-subspace of the form $C = h K[U]$, where $h \in F$ is homogeneous and $U$ is a subset of $X$. For a graded subspace $P$ of $F$, any finite direct sum of cones
\[
P = \bigoplus_{i = 1}^t h_i K[U_i].  
\]
is called a \emph{cone decomposition} of $P$. Dube realized the importance of special cone decompositions, called $q$-exact,  satisfying additional properties  (see \Cref{def:q-exactness}). Extending a result of Dub\'e, Liang showed in \cite[Theorem 20]{Liang} that certain graded subspaces of $F$ admit a $q$-exact cone decomposition for a suitable integer $q$.  

Denote by $M$ any nonzero finitely generated graded $S$-module. 
We introduce the \emph{Macaulay constants} $b_0 (M),\ldots, b_{d+1} (M)$ of $M$, where $d$ is the  Krull dimension of $M$, by reading them off from suitable $q$-exact cone decompositions (see \Cref{sec:cone decomp modules}). It turns out that these integers satisfy 
\[
b_0 (M) \ge b_1 (M) \ge \cdots \ge b_d (M) > b_{d+1} (M) = e^+ (M),  
\] 
where $e^+ (M)$ denotes the maximum degree of a minimal generator of $M$ (see \Cref{prop:comp Macaulay const}). 

The Macaulay constants of $M$ determine the Hilbert polynomial of $M$ when it is written in a particular form. We use this form  to obtain an explicit characterization of Hilbert polynomials of finitely generated graded $S$-modules (see \Cref{thm:char hilb pol}). 

The Castelnuovo-Mumford regularity $\reg (M)$  is an important and much-investigated complexity measure of $M$. It can be defined using a graded minimal free resolution of $M$ over $S$ or the local cohomology modules $H^i_\fm (M)$ of $M$ with support in the homogeneous maximal ideal $\fm$ of $S$. Taking into account the latter prospective, one defines, 
following \cite{N-90}, the \emph{$k$-regularity}  $\reg_k (M)$ of $M$ as 
\[
\reg_k (M) = 
\min \{ m \in \Z \; \mid \; [H^i_\fm (M)]_j = 0 \text{ whenever } j > m-i \text{ and } i \ge k\}.
\]
Thus, upper bounds  on $\reg_k (M)$ correspond to vanishing results for local cohomology modules. 
Note that 
\[
\reg (M) = \reg_0 (M) \ge \reg_1 (M) \ge \cdots \ge \reg_{d} (M). 
\]

The following statement partially summarizes our cohomological vanishing results  (see \Cref{thm:reg-i bound} and \Cref{cor:sharpness} for more details). 

\begin{thm*} 
   \label{thm:intro reg-i bound} 
If $M \ne 0$ is any finitely generated graded $S$-module then one has 
 \[
 \reg_k (M) < b_k (M) \quad \text{  for } k=0,1,\ldots,\dim M.  
 \]
 
 Moreover, for any integers $0 \le t \le d \le \dim S$ and $b_t  \ge  b_{t+1}\ge \cdots \ge b_d > b_{d+1}$, there a is cyclic graded $S$-module $M$ satisfying $\dim M = d$, $b_i (M) = b_i$ for $k = t,t + 1,\ldots,d+1$ and 
 \[
 \reg_k (M) = b_k (M) - 1 \quad \text{whenever } t \le k \le d. 
 \]
\end{thm*}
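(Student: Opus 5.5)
The plan is to prove the inequality $\reg_k(M) < b_k(M)$ by working directly with a $q$-exact cone decomposition adapted to $M$, as in \Cref{sec:cone decomp modules}. Present $M = F/N$ with $F$ a graded free module. Choose a Gröbner basis of $N$ with respect to some term order. Use Macaulay's theorem to replace $M$ by $F/\ini(N)$. The Macaulay constants are defined from the standard monomials, so they are the same for $M$ and for $F/\ini(N)$. On the other hand, upper semicontinuity of local cohomology under Gröbner degeneration gives $\dim_K [H^i_\fm(M)]_j \le \dim_K [H^i_\fm(F/\ini(N))]_j$. It therefore suffices to treat a module whose standard monomials $P$ admit a $q$-exact cone decomposition $P = \bigoplus h_i K[U_i]$, with $q = b_{d+1}$ and with $b_k$ read off from it.

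The next step is to relate the cones of dimension at least $k$ to the local cohomology in cohomological degree at least $k$. I would argue by induction on $k$ from the top, $k=d$ down to $0$, using generic hyperplane sections. After a generic change of coordinates, which should be harmless since $q$-exactness can be arranged for generic coordinates, let $\ell$ be a general linear form and consider
\[
0 \to (0:_M \ell)(-1) \to M(-1) \to M \to M/\ell M \to 0 .
\]
The constants should satisfy $b_{k}(M/\ell M) \le b_{k+1}(M)$, with the possible exception of $k=0$: cutting by $\ell$ kills one variable in each cone of positive dimension, and exactness keeps the degree bookkeeping under control. The induction hypothesis then gives vanishing of $H^{i}_\fm(M/\ell M)$ in degrees $j \ge b_{k+1}(M)-i$. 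The long exact cohomology sequence yields that multiplication by $\ell$ on $H^{i}_\fm(M)$ is surjective in degrees $j > b_k(M) - i - 1$ for $i \ge k$. Combined with the fact that local cohomology vanishes in large degree, this gives $[H^i_\fm(M)]_j=0$ for $j\ge b_k(M)-i$.

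For $i = k$ itself, the extra input is the count of cones of dimension exactly $k$ in degrees between $b_{k+1}$ and $b_k$. Comparing Hilbert functions, that is, the Hilbert polynomial written in the Macaulay-constant form, with the Hilbert function of $M$ controls the jump. This comparison is the main obstacle: the surjectivity argument only works if the degree window where $\ell$ might fail to be surjective is exactly bounded by $b_k-1$. I would first try to prove directly from exactness that the Hilbert function and the Hilbert polynomial of the truncated part agree from degree $b_k - k$ on, which is an Gotzmann-type statement.

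For sharpness, I would construct a cyclic module $S/I$ with $I$ a lex-segment-type monomial ideal. Its standard monomials decompose as a chain of cones $x_1^{a}\cdots K[x_{k+1},\dots]$ whose degrees realise the prescribed $b_t,\dots,b_{d+1}$. For such "staircase" ideals, local cohomology can be computed explicitly, for instance by iterated saturation or via a filtration by the cones. I would then check that $[H^k_\fm]_{b_k-k-1}\neq 0$ by exhibiting a nonzero element coming from the last cone of dimension $k$.
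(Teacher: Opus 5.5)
Your reduction to $F/\ini_<(N)$ by semicontinuity and your use of a general linear form are both ingredients of the paper's proof. However, the hyperplane step runs in the wrong direction, and the essential case is left open.

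In $0 \to (M/(0:_M\ell))(-1) \to M \to M/\ell M \to 0$, vanishing of $[H^i_\fm(M/\ell M)]_j$ only makes $\ell\colon [H^i_\fm(M)]_{j-1}\to[H^i_\fm(M)]_j$ surjective. Surjectivity together with vanishing in large degrees proves nothing. A counterexample is a graded module that is one-dimensional in degrees $c-1$ and $c$, with $\ell$ mapping the first isomorphically onto the second. What works is injectivity, which comes from vanishing of $[H^{i-1}_\fm(M/\ell M)]_j$. It yields $\reg_k(M)\le\reg_{k-1}(M/\ell M)$, so the induction runs on $\dim M$ and lowers the index. The identity $b_{k-1}(M/\ell M)=b_k(M)$ holds for $k\ge 2$ (\Cref{prop:b_i and postive depth}, proved via Hilbert polynomials in Macaulay-constant form, not by ``cutting cones''). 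It can fail for $k=1$ (\Cref{exa:constants under hyperplane sections}). Hence everything reduces to $k=1$ and $k=0$, and your ``case $i=k$ itself'' is exactly this residue.

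The missing idea is that $\reg_1(M)<b_1(M)$ is not a bookkeeping statement. For $M=S/I$ it is precisely Gotzmann's Regularity Theorem for $I^{sat}$, since $b_1(S/I)$ equals Gotzmann's number (\Cref{lem:compare hilb}). It cannot be obtained by comparing Hilbert function and Hilbert polynomial through a cone decomposition, for three reasons:
\begin{enumerate}
\item cones are only $K$-subspaces, not submodules;
\item every cone $C=hK[U]$ satisfies $h_C(j)=p_C(j)$ for $j\ge\deg h-|U|+1$ automatically;
\item $h_M-p_M$ only sees the alternating sum $\sum_i(-1)^i\dim_K[H^i_\fm(M)]_j$, not the individual local cohomology modules.
\end{enumerate}

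The paper gets $k=1$ as follows. It truncates at $e^+(M)$, which changes neither $b_1$ nor $\reg_1$. It then degenerates to $F/\ini_<(Q)\cong\bigoplus_j(S/I_j)e_j$ with all $e_j$ in one degree, applies \Cref{thm:MN} (equivalently Gotzmann) to each cyclic summand, and compares $b_1$ of direct sums via \Cref{lem:Macaulay cont for direct sum}. The case $k=0$ then needs three further ingredients, none of which appears in your sketch: $r(M)\le b_0(M)$ from the cone decomposition, $\reg(M)=\max\{r(M)-1,\reg(M^{sm})\}$, and $b_0(M)\ge b_1(M)$.

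For sharpness, staircase monomial ideals are the right kind of example; with monomial choices, the paper's $I(f_t,\dots,f_d)$ are of this type. But ``a nonzero element coming from the last cone'' is not a local cohomology computation, for the same reason that cones are not submodules. The paper computes $H^i_\fm(S/I)^\vee$ explicitly. It handles $b_t=b_{t+1}$ by passing to $\tilde t=\max\{i\mid b_i=b_t\}$ and using that $\reg_i(M)$ is constant for $i\le\depth M$.
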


It is worth stressing two special cases of this statement. For $k = 0$, one gets $\reg (M) \le b_0 (M)$. The estimate $\reg_1 (M) \le b_1 (M)$ may be interpreted as an extension of 
Gotzmann's Regularity Theorem in \cite{Go} from ideals to modules (see \Cref{rem:compare with Gotzmann}). 
Our results also imply an upper bound on the Castelnuovo-Mumford regularity of a coherent sheaf on $\PP^n$. It is attained for structure sheaves of certain projective subschemes (see \Cref{cor:reg sheaves}). 

This paper is organized as follows. In \Cref{sec:cone decomp modules}, we recall the concept of a cone decomposition  and use special cone decompositions to define the Macaulay constants of a finitely graded module $M$. We also establish some of their basic properties  (see \Cref{lemdef:Macaulay constants}). If the grading of $M$ is shifted its Macaulay constants change, but in a controlled way. 

In \Cref{sect:Hilbert pol}, we use Macaulay constants to present the Hilbert polynomial of a finitely graded module $M$ in a particular form. This leads to an explicit characterization of Hilbert polynomials of finitely graded modules. Subsequently, these results are used to establish further properties of Macaulay constants as, for example, their behavior under truncation and comparisons of Macaulay constants (see \Cref{prop:comp Macaulay const}). The latter result also relies on   \Cref{prop:reg index bound by Mac constants}. It guarantees that the Hilbert function and the Hilbert polynomial of $M$ are the same in any degree $j \ge b_0 (M)$. 

Our vanishing results on local cohomology modules are established in \Cref{sec:vanishing}. We begin by interpreting \cite[Theorem 7.5]{MN} on saturated ideals using a Macaulay constant and generalizing the statement to modules. The latter may be viewed as an extension of  Gotzmann Regularity Theorem. Using additional comparison results for Macaulay constants (see \Cref{prop:b_i and postive depth}), we establish the above theorem on $k$-regularities and explicitly describe modules for which the bounds are simultaneously sharp (see \Cref{thm:reg-i bound} and \Cref{cor:sharpness}).  We conclude with an application to coherent sheaves on projective space (see \Cref{cor:reg sheaves}).


\section{Review of Cone Decompositions} 
   \label{sec:cone decomp modules}

Dub\'e introduced cone decompositions for certain graded subspaces of a polynomial ring $S$ and Liang extended the concept to subspaces of a graded finitely generated free $S$-module. Our goal is to use cone decompositions for describing properties of finitely generated graded $S$-modules. We review cone decompositions from this point of view and introduce the Macaulay constants of a finitely generated graded module. 
   
We begin by fixing some notation. We denote by $K$ any field. A \emph{($\Z$-)graded $K$-vector space} is a $K$-vector space $V$ with a decomposition $V = \oplus_{j} [V]_j$ into subspaces. We refer to $[V]_j$ as the degree $j$ component of $V$. Assuming these components are all finite-dimensional, the \emph{Hilbert function} of $V$ is $h_V \colon \Z \to \Z, \  h_V (j) = \dim_K [V]_j$. 
The \emph{initial degree} of $V$ is  $a(V) = \inf \{j \in \Z \; \mid \; [V]_j \neq 0\}$, and so the initial degree of the zero module is $ \infty$. 

We denote by $S$ a polynomial ring over $K$ with finite variable set $X$ and standard grading, i.e., all variables of $S$ have degree one. We write $|U|$ for the cardinality of any finite set $U$. 

For a finitely generated  $\Z$-graded nonzero $S$-module  $M = \oplus_{j} [M]_j$, we denote by $e^+ (M)$ the maximum degree of a minimal generator of $M$. The \emph{truncation} $M_{\ge j}$ at degree $j$ is the submodule of $M$ consisting of all elements whose degree is at least $j$. The Hilbert function of $M$ is given by a polynomial in all sufficiently large degrees. This polynomial is called the \emph{Hilbert polynomial} of $M$ and denoted $p_M$. 

Denote by $F$ a finitely generated graded free $S$-module. 
Considering $F$ as  a graded $K$-vector space,  for any homogeneous $h \in F$ and any subset $U \subseteq X$, the set $C = h K[U]$ is a graded subspace of $F$. It is called a \emph{cone} generated by $h$. If $h \neq 0$  its initial degree is $a (C) = \deg h$, and $C$ is said to have dimension $|U|$. Note that the latter is equal to the Krull dimension when $C$ is considered as a $K[U]$-module. 
A \emph{cone decomposition} of a graded subspace $P$ of $F$ is any finite direct sum of cones 
\[
P = \bigoplus_{i = 1}^t h_i K[U_i].  
\]
We denote the maximum degree of a cone generator by $e^+ (P)$ , that is, 
\[
e^+ (P) = \max \{\deg h_i \; \mid \; i \in [t]\}. 
\]
The \emph{positive-dimensional part} $P^+$ of $P$ is the direct sum over the cones of $P$ with positive dimension. Observe that  $P^+ = 0$ if and only if $P$ is a finite-dimensional vector space. In this case, 
any cone decomposition of $P$ has the form $\bigoplus_{i=1}^{t} h_i K$, where $ t = \dim_K P$ and the number of distinct $h_i$ with degree $j$ is equal to $h_P (j)$. 
In general, the Hilbert function of a graded subspace $V$ imposes even fewer restrictions on cone decompositions of $V$. 

\begin{example}
   \label{exa:fan} 
For any cone $C =  h K[U]$, there is an equality of graded subspaces of $F$
\[
h K[U]  = h K \oplus \bigoplus_{k=1}^s x_{i_k} h K[x_{i_1},\ldots,x_{i_k}],     
\]
where $U$ consists of $s$ variables  $x_{i_1},\ldots, x_{i_s} \in X$. 
Following Dub\'e \cite{Dube}, the right-hand side is called the \emph{fan decomposition} of $C$ and denoted $F(C)$. Observe that in case $s \ge 1$ one has $a (F(C)) = a(C) = \deg h$ and $a (F(C)^+) = 1 + a(C^+) = 1 + \deg h$.   
\end{example}

Dub\'e introduced particular cone decompositions of subspaces of $S$. These were extended to subspaces of free $S$-modules by Liang. 

\begin{definition}
    \label{def:q-exactness} 
Consider a cone decomposition 
\[
P = \bigoplus_{i = 1}^t h_i K[U_i].  
\]

\begin{itemize}

\item[(i)] (\cite[Definition 12]{Liang}) The decomposition is said to be \emph{$q$-standard} if the following two conditions are satisfied: 
\begin{enumerate}

\item[(a)] For any $i \in [t]$, one has $h_i \neq 0$ and $U_i \neq \emptyset$ implies $\deg h_i \ge q$; 

\item[(b)]  For every $i \in [t]$ with $U_i \neq \emptyset$ and any integer $d$ such that $q \le d \le \deg h_i$, there is some $h_j$  ($j \in [t]$) 
with $\deg h_j = d$ and $|U_j| \ge |U_i|$. 
\end{enumerate}

\item[(ii)] (\cite[Definition 21]{Liang}) The decomposition is said to be \emph{$q$-exact} if it is $q$-standard and, for any $i \neq j$ with 
$U_i \neq \emptyset$ and $U_j \neq \emptyset$, 
one has $\deg h_i \neq \deg h_j$. 
\end{itemize}

\end{definition} 

\begin{remark}
    \label{rem:q-standard}
If $P$ is   a cone decomposition with $P^+ = 0$ then $P$ is $q$-exact for any integer $q$. However, if $P^+$ is non-trivial then one must have $q = a(P^+)$. 
\end{remark}

Dub\'e showed that any $q$-standard decomposition can be transformed to a $q$-exact cone decomposition by using the following strategy:  Whenever one has two cones with the same initial degree replace the cone of lower dimension by its fan decomposition. This leads to the following procedure. 

\begin{alg}[Converting a $q$-Standard to a $q$-Exact Cone Decomposition]
       \label{alg:transform q-stand to q-exact} \mbox{ }
       
Input: A $q$-standard cone decomposition $\tilde{P}$ of a graded vector space $V$ with  $(\tilde{P})^+ \neq \emptyset$. 

Output: A $q$-exact cone decomposition $P$ of $V$.

\begin{enumerate}

\item Set $P := \tilde{P}$. 

\item For $j = q, q+1,\ldots,e^+ (P^+)$, repeat the following steps: 

\begin{enumerate}

\item Denote by $T$ the set of positive-dimensional cones of $P$ with initial degree $j$. 

\item While $|T| \ge 2$ repeat the following steps:  Choose a cone $C$ in $T$ with minimum dimension and replace $C$ in $P$ by its fan decomposition $F (C)$.  
Denote the new decomposition of $V$ by $P$ and  update $T$ to $T \setminus \{C\}$.   

\end{enumerate}

\item Return $P$. 
\end{enumerate}

\end{alg}

We illustrate the algorithm by a simple example. 

\begin{example} 
   \label{exa:conversion q-stand to q-exact}
\Cref{alg:transform q-stand to q-exact} replaces the 1-standard decomposition $\tilde{P} = x K[x,y] \oplus y K[x,y]$ first by, say,  
\[
x K[x,y] \oplus y K \oplus xy K[x] \oplus y^2 K[x, y] 
\]
and then by 
\[
x K[x,y] \oplus y K \oplus xy K \oplus xy^2 K[x]  \oplus y^2 K[x, y].  
\]
It is 1-exact because the cones $yK$ and $x y K$ are $0$-dimensional. 
\end{example}

\begin{lemma}
    \label{lem:q-standard to q-exact}
\Cref{alg:transform q-stand to q-exact} terminates and is correct. 
\end{lemma}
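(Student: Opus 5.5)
The plan is to verify three things in turn: at every stage the current $P$ is a cone decomposition of $V$; at every stage $P$ is $q$-standard; and after the pass for degree $j$, $P$ has at most one positive-dimensional cone in each degree $q,\dots,j$. Termination comes from counting.

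\textbf{Still a decomposition of $V$.} Replacing a cone $C$ by its fan decomposition $F(C)$ (\Cref{exa:fan}) gives the same graded subspace. Hence $P$ remains a cone decomposition of $V$ throughout.

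\textbf{Effect of one fan replacement.} Take $C = hK[U]$ with $|U| = s \ge 1$ and $\deg h = j$. Then $F(C)$ consists of:
\begin{itemize}
\item one $0$-dimensional cone $hK$ in degree $j$;
\item positive-dimensional cones $x_{i_k}hK[x_{i_1},\dots,x_{i_k}]$ of dimension $k \le s$, all in degree $j+1$.
\end{itemize}

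\emph{Exactness in lower degrees.} Nothing in degrees below $j+1$ is affected except that one positive-dimensional cone in degree $j$ disappears. So the count of positive-dimensional cones in degrees $<j$ is unchanged. In degree $j$ the set $T$ shrinks by one.

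\emph{Termination.} The inner loop at degree $j$ stops once $|T| \le 1$. The outer loop's upper bound $e^+(P^+)$ may grow during the run. Note that new positive-dimensional cones in degree $j+1$ have dimension at most $s$, and the maximum dimension of positive-dimensional cones never increases. For termination I would argue that once degree $j$ exceeds the old maximum degree, $T$ contains only cones created by the previous step, and these have pairwise distinct dimensions $1,\dots,s$. A single replacement can therefore only feed forward cones of dimension smaller than the surviving cone of maximal dimension in $T$. A potential function such as $\sum_{C} (\text{number of cones of dimension} \ge \dim C)$, or simply the fact that beyond the original $e^+$ the maximal dimension present in new degrees strictly decreases, shows that $T$ eventually becomes empty. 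This happens within at most $\max|U_i|$ degrees past the original $e^+(\tilde P^+)$.

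\textbf{Preserving $q$-standardness; main obstacle.} Condition (a) is clear, since new positive cones sit in degree $j+1 > q$. Condition (b) is the main obstacle. The difficulty is that removing $C$ from degree $j$ might destroy a witness needed by some cone of higher degree.

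In degree $j$, this is handled by the choice of $C$ with minimum dimension among the at least two cones in $T$. Some cone $C'$ of dimension $\ge \dim C$ remains in degree $j$. Any cone whose witness in degree $j$ was $C$ can use $C'$ instead.

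For the new cones in degree $j+1$, of dimension $k \le s$, I need witnesses in each degree $d$ with $q \le d \le j+1$:
\begin{itemize}
\item in degree $j+1$, the cone itself serves;
\item for $q \le d \le j$, the original cone $C$ of dimension $s \ge k$ had witnesses of dimension $\ge s$, and these are still present, using $C'$ in degree $j$ if needed.
\end{itemize}
Cones $D$ of degree larger than $j+1$ had witnesses of dimension $\ge \dim D$ in each degree. In degree $j$ this witness is replaced by $C'$ if it was $C$. In the other degrees nothing was removed.

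I would formalize all of this as an invariant: before each replacement, $P$ is $q$-standard with $|T| \ge 2$, and the replacement preserves $q$-standardness. Correctness then follows. At the end every degree has $|T| \le 1$, so $P$ is $q$-exact, and condition (b) holds by the invariant.
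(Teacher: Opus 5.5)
Your correctness argument is sound. Fan replacement preserves the subspace. Choosing $C$ of minimum dimension while $|T|\ge 2$ leaves a cone $C'$ in degree $j$ with $\dim C'\ge\dim C$, and $C'$ can take over $C$'s role as a witness in condition (b), both for older cones and for the new cones of $F(C)$ in degree $j+1$. Later passes never touch degrees $\le j$.

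The gap is termination: the claims you rely on there are false. Beyond the original top degree, $T$ consists of the cones produced by \emph{all} fans of the previous pass. All but one cone of each $T$ get fanned, and each contributes its own chain of dimensions $1,\dots,\dim C$. So dimensions in $T$ repeat, the maximal dimension need not drop from one degree to the next, and the number of extra degrees is not bounded by $\max|U_i|$.

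For a counterexample, take $S=K[x]$, $F=S^3$ with basis $e_1,e_2,e_3$ in degree $0$, and the $0$-standard decomposition $e_1K[x]\oplus e_2K[x]\oplus e_3K[x]$ of $F$. The pass $j=0$ fans two cones and leaves the two $1$-dimensional cones $xe_1K[x]$, $xe_2K[x]$ in degree $1$. The pass $j=1$ fans one of them, so positive-dimensional cones persist up to degree $2>0+\max|U_i|$. Your suggested potential $\sum_C(\cdots)$ stays equal to $9$ throughout this run. In general fanning increases the number of positive-dimensional cones, so sums of this kind do not decrease.

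What is missing is the right monotone quantity.
\begin{itemize}
\item Let $D$ be the largest dimension of a cone in $\tilde P$; it never increases.
\item At the start of the pass for $j$, let $m_k(j)$ be the number of $k$-dimensional cones of $P$ with initial degree $\ge j$.
\item Let $D_j$ be the largest dimension occurring in $T$. Exactly one cone of dimension $D_j$ survives the pass, and every fanned cone has dimension $\le D_j$.
\item A fanned cone of dimension $s$ produces exactly one cone of each dimension $1,\dots,s$, all in degree $j+1$.
\end{itemize}
Hence $m_k(j+1)=m_k(j)$ for $k>D_j$ and $m_{D_j}(j+1)=m_{D_j}(j)-1$. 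So $(m_D(j),\dots,m_1(j))$ strictly decreases in the lexicographic order on $\mathbb{N}^D$, which is a well-order. Because $q$-standardness is preserved, condition (b) forces $T\neq\emptyset$ for every $q\le j\le e^+(P^+)$. So this decrease happens at every pass and the outer loop stops; each inner loop stops trivially since $T$ only shrinks.

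Equivalently, argue by descending induction on $k$: once no cones of dimension $>k$ remain in degrees $\ge j$, the number of $k$-dimensional cones in degrees $\ge j$ drops by one per degree. The paper gives no argument of its own and refers to Mayr--Ritscher, Lemma 16. With this repair, your proposal becomes a self-contained proof.
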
 

\begin{proof}
The assertions are consequences of the properties of fan decompositions. For details, see \cite[Lemma 16]{MR}. 
\end{proof}

As observed by Dub\'e and Liang, $q$-exact cone decompositions enjoy certain uniqueness properties. 

\begin{lemma}
     \label{lem:props q-exactness}
If $V$ is any infinite-dimensional graded subspace of $F$, then any $q$-exact cone decomposition of $V$ (if one exists) is of the form 
\begin{equation*}
P =   \bigoplus_{k=1}^{t} h_{k, 0} K \  \oplus \   \bigoplus_{j=1}^{d}  \ \bigoplus_{k=b_{j+1}}^{b_j -1} h_{k, j} K[U_j]
\end{equation*}
with homogeneous $h_{k, j} \in F$ and   integers $d$,  $b_1,\ldots,b_{d+1}$ and $t \ge 0$ that satisfy  
\begin{itemize} 

\item $h_{k, j} \ne 0$ and $\deg h_{k, j} = k$ if $j \ge 1$;  

\item $b_1 \ge \cdots \ge b_{d+1} = q$; and 

\item $|U_{ j}| = j$. 

\end{itemize} 

Furthermore, $d$,  $b_1,\ldots,b_{d+1}$ and $e^+(P)$ are uniquely determined by $q$ and the Hilbert function of $V$. 
\end{lemma}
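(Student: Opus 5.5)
Let $P = \bigoplus_i h_i K[U_i]$ be a $q$-exact cone decomposition of $V$. Since $V$ is infinite-dimensional, $P^+ \neq 0$, so by \Cref{rem:q-standard} every positive-dimensional cone has $\deg h_i \ge q$, and $q = a(P^+)$. Let $d$ be the maximal dimension of a cone in $P$; then $d \ge 1$. For $1 \le j \le d$ let $D_j$ be the set of degrees of generators of the $j$-dimensional cones. By $q$-exactness, the degrees of all positive-dimensional generators are pairwise distinct. So the $D_j$ are pairwise disjoint, and each degree occurs at most once overall.

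\emph{Shape.} Put $e_j = \max D_j$ when $D_j \neq \emptyset$. First, the union $D_{\ge j} = D_j \cup \cdots \cup D_d$ is an interval of integers starting at $q$. Indeed, if a cone of dimension at least $j$ has degree $m$, then condition (b) provides, for every $q \le m' \le m$, a cone of degree $m'$ and dimension at least $|U_i| \ge j$. Also $q \in D_{\ge 1}$, since $q = a(P^+)$. Next, I would show that $D_d$ is an initial segment of $D_{\ge 1}$ and, inductively, that $D_j$ lies directly above $D_{j+1}$. Suppose some degree $m \in D_j$ lies below some $m' \in D_{j'}$ with $j' > j$. Condition (b) applied to the cone at $m'$ gives a cone of dimension at least $j'$ in degree $m$. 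By uniqueness of degrees this cone must be the one at $m$, of dimension $j < j'$, a contradiction. Hence higher-dimensional cones occupy lower degrees, and each $D_{\ge j}$ is an interval $[q, b_j - 1]$. Setting $D_j = [b_{j+1}, b_j - 1]$ with $b_{d+1} = q$ gives $b_1 \ge \cdots \ge b_{d+1} = q$, with $b_d > q$. Empty $D_j$ for $j < d$ simply correspond to $b_j = b_{j+1}$. Writing $h_{k,j}$ for the unique generator of degree $k$ and $U_j$ for its variable set (its cardinality is $j$; the set itself may depend on $k$, and the notation absorbs this) yields the displayed form. The zero-dimensional cones are collected as $h_{k,0}K$.

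\emph{Uniqueness.} Here I would compare Hilbert functions. The Hilbert function of $\bigoplus_{k=b_{j+1}}^{b_j-1} h_{k,j}K[U_j]$ in large degrees $m$ is $\sum_{k=b_{j+1}}^{b_j-1}\binom{m-k+j-1}{j-1}$. The zero-dimensional part contributes nothing for $m \gg 0$. Thus the Hilbert polynomial $p_V(m) = \sum_{j=1}^d \sum_{k=b_{j+1}}^{b_j-1} \binom{m-k+j-1}{j-1}$ is determined by $h_V$. Its degree is $d-1$, which determines $d$. Given $q = b_{d+1}$, I would peel off the data from the top. The leading coefficient is $(b_d - q)/(d-1)!$, which determines $b_d$. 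Subtracting the $d$-dimensional contribution leaves a polynomial of degree at most $d-2$, whose coefficient of $m^{d-2}$ determines $b_{d-1} - b_d$, and so on downward. The main obstacle is making this descent clean: one must check that the $j$-dimensional block contributes exactly $(b_j - b_{j+1})m^{j-1}/(j-1)!$ in degree $j-1$, and that it only affects lower coefficients through terms already known. Alternatively, one could use the identity $\sum_{k=a}^{b-1}\binom{m-k+j-1}{j-1} = \binom{m-a+j}{j} - \binom{m-b+j}{j}$ to telescope, and then argue by induction on $d$.

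Finally, $e^+(P)$ is determined as follows. The positive-dimensional generators have maximal degree $b_1 - 1$. The zero-dimensional cones account exactly for $h_V(m) - h_{P^+}(m)$ in each degree $m$, and $h_{P^+}$ is determined by $q$ and $b_1, \ldots, b_d$ via the formula above, valid for all $m$. Hence the degrees of the $h_{k,0}$, counted with multiplicity, are determined. Therefore $e^+(P)$ is the maximum of $b_1 - 1$ and the largest $m$ with $h_V(m) \neq h_{P^+}(m)$.
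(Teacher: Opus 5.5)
Your proposal is correct and follows the same outline as the paper's sketch. The shape comes from conditions (a), (b) and the distinct-degree requirement of $q$-exactness; $d$ and $b_1,\dots,b_d$ are read off from the Hilbert polynomial once $b_{d+1}=q=a(P^+)$ is fixed; and comparing $h_V$ with $h_{P^+}$ gives the same formula for $e^+(P)$ as the paper. The only difference is that you prove the shape and the coefficient-peeling uniqueness directly, whereas the paper defers these to Liang's Lemma 23, Dub\'e's Lemma 6.1 and Dub\'e's Section 7.
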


\begin{proof}
The stated properties are a straightforward consequence of the definition of a $q$-exact cone decomposition (see \cite[Lemma 23]{Liang} and \cite[Lemma 6.1]{Dube}). 
For the convenience of the reader we indicate the main steps: 
By \Cref{rem:q-standard}, $q$ must be the initial degree of $P^+$. Thus, 
 $d$,  $b_1,\ldots,b_d$ are uniquely determined by the Hilbert function of $V$ (see \cite[Section 7]{Dube}).  
 Note that $b_1 = e^+ (P^+) + 1$. 
 Comparing vector space dimensions it follows now that $t$ and $e^+ (P)$  are uniquely determined as well. In particular, one has $e^+ (P) = b_1 - 1$ if $P = P^+$ (i.e.\  $t=0$) and 
 $e^+ (P)$ is equal to the maximum of $b_1 - 1$ and $\max \{ j \in \Z \; \mid \; h_V (j) = h_P (j) > h_{P^+} (j) \}$, otherwise.   
\end{proof}

Consider now any finitely generated graded $S$-module $M \neq 0$ and a homogeneous minimal generating set $G$ of $M$. By Nakayama's Lemma, the degrees of the elements $g_1,\ldots,g_r$ of $G$ are uniquely determined. Fix any graded  free $S$-module $F$ of rank $r = |G|$ with a basis of homogeneous elements $e_1,\ldots,e_r$ satisfying $\deg e_j = \deg g_j$. Thus, there is a graded epimorphism $\phi \colon F \to M$ that maps each $e_j$ onto $g_j$. Such a map $\phi$ is said to be a \emph{minimal epimorphism}. The module  $Q = \ker \, \phi$ is called a \emph{syzygy module} of $M$. Note that $F$ and $Q$ are uniquely determined up to graded isomorphisms by $M$. Thus, we denote them by $F_M$ and $Q_M$ if we want to stress the dependency on $M$. 

Fix now any monomial order $<$ on the monomials of $F$. For background on Gr\"obner bases, we refer to \cite[Chapter 15]{Eis95}.  Denote by $N_M$ the $K$-subspace of $F$ whose basis consists of the monomials in $F$ that are not in the initial module $\ini_< (Q)$ of $Q$. 
Observe that  $F = Q \oplus N_M$ as graded vector spaces  (see, e.g., \cite[Theorem 15.3]{Eis95}). 
Moreover, $M \cong N_M$ as graded vector spaces. For example, if $M$ is a graded cyclic module that is generated in degree zero then $M \cong S/I$ for some homogeneous ideal $I$,  and so $S/I \cong S/\ini_< (I) \cong N_M$ as graded vector spaces. 

Using the above notation, we extend the concept of cone decompositions of submodules of free modules to more general modules. 

\begin{definition}
    \label{def:cone decomp}
Let $M$ denote any finitely generated graded $S$-module.     
A \emph{cone decomposition of $M$} is any cone decomposition of $N_M \subseteq F$ where $F$ is a free $S$-module admitting  a minimal epimorphism  $F \to M$.  It is called $q$-standard or $q$-exact if the cone decomposition of $N_M$ has this property. 
\end{definition}

The definition of $N_M$ depends on the choices of $G$, $F$ and $<$. However, the isomorphism of graded $K$-vector spaces $N_M \cong M$ implies that the Hilbert function of $N_M$ is independent of these choices, which is crucial in view of \Cref{lem:props q-exactness}. 

Dub\'e established that, for any homogeneous ideal $I$ of $S$, the $K$-algebra $S/I$ admits a  $0$-exact cone decomposition. Reducing to this case, Liang essentially extended the result to graded $S$-submodules though she stated it differently (see {\cite[Theorem 20]{Liang}).  

\begin{prop} 
    \label{prop:existance q-decomp} 
Any finitely generated graded $S$-module $M$ admits an $e^+(M)$-exact cone decomposition. 
\end{prop}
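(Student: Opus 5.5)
Write $F = F_M = \bigoplus_{i=1}^r S e_i$ with $\deg e_i = a_i$, so that $e^+(M) = \max_i a_i =: q$. The plan is to reduce to the cyclic case treated by Dub\'e and then assemble the pieces. Since $\ini_<(Q)$ is a monomial submodule of $F$, it has the form $\bigoplus_i J_i e_i$ for monomial ideals $J_i \subseteq S$. Hence
\[
N_M = \bigoplus_{i=1}^r N_i e_i,
\]
where $N_i$ is the $K$-span of the monomials of $S$ not in $J_i$. By minimality of $\phi$, no $e_i$ lies in $\ini_<(Q)$: otherwise some element of $Q$ would have a unit coefficient in degree $a_i$, contradicting $Q \subseteq \fm F$. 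So $J_i \subseteq \fm$ for every $i$.

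First step. Apply Dub\'e's theorem that $S/J_i$ admits a $0$-exact cone decomposition; via $S/J_i \cong N_i$ this gives such a decomposition of $N_i$. Dub\'e's construction (splitting off a variable recursively) actually produces cones generated by monomials outside $J_i$, so the decomposition is of $N_i$ itself. Multiplying by $e_i$ gives an $a_i$-standard, indeed $a_i$-exact, cone decomposition $P_i$ of $N_i e_i$. The direct sum $\bigoplus_i P_i$ is then a cone decomposition of $N_M$, but it is only standard with respect to varying $q$'s.

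Second step: raise every piece to a common $q$-standard decomposition with $q = e^+(M)$. For a $0$-exact (hence $0$-standard) decomposition one can shift upward: iteratively replace every positive-dimensional cone of initial degree less than $q'$ by its fan decomposition, as in \Cref{exa:fan}. Each such step raises the initial degree of the positive-dimensional part by one and keeps the decomposition standard. This is Dub\'e's shifting lemma; the point to check is that condition (b) of \Cref{def:q-exactness} survives, because the fan of a cone of dimension $s$ supplies cones of every dimension $\le s$ in consecutive degrees. Applying this to $P_i$ for $a_i - a_i$ steps would be trivial, so instead we shift $P_i$ until its positive part starts in degree $q$. This uses $q - a_i \ge 0$ steps, and here we need $q \ge a_i$, which holds by the choice of $q$. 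We obtain $q$-standard decompositions $P_i'$ of $N_i e_i$, whose positive-dimensional cones all have generator degree $\ge q$.

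Third step: show that a direct sum of $q$-standard decompositions with the same $q$ is $q$-standard. Condition (a) is clear. For (b), given a cone in $P_i'$ of degree $d_0$ and a degree $d$ with $q \le d \le d_0$, the required cone already exists inside $P_i'$. Hence $\bigoplus_i P_i'$ is a $q$-standard cone decomposition of $N_M$. If its positive part is empty, \Cref{rem:q-standard} finishes the proof. Otherwise, \Cref{alg:transform q-stand to q-exact} together with \Cref{lem:q-standard to q-exact} converts it into a $q$-exact decomposition, which is the claim.

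The main obstacle is the second step: verifying carefully that the fan-shifting preserves $q$-standardness at each increment. A subtle point is the cones of dimension zero produced by the fans; they are harmless, since (a) and (b) only concern positive-dimensional cones. I would cite Dub\'e \cite[Section 4]{Dube} or Liang \cite[Theorem 20]{Liang} for this shifting lemma, and check only the induction step from $q'$ to $q'+1$ directly.
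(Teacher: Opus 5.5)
Your proposal is correct and follows essentially the same route as the paper. You write $\ini_<(Q)=\bigoplus_i J_i e_i$, take Dub\'e's $0$-standard decompositions of the $N_i$, shift each $N_i e_i$ up to an $e^+(M)$-standard decomposition by fan decompositions (the paper cites Liang's Lemma 15 for this step, which you verify directly), add them up, and convert with \Cref{alg:transform q-stand to q-exact}; your extra observation $J_i\subseteq\fm$ is true but not needed, since $J_i=S$ would only contribute an empty summand.
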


\begin{proof}
For the reader's convenience, we sketch the argument which is essentially due to Liang. 

If $\dim M = 0$ there is nothing to show by \Cref{rem:q-standard}. 

Assume $d = \dim M \ge 1$. Set $e = e^+ (M)$.  Using the above notation, write $M \cong F/Q$. By construction of $F = F_M$, one has $e^+ (F) = e^+ (M) = e$. Fix a basis $\{e_1,\ldots,e_r\}$ of $F$ consisting of homogeneous elements. It follows that $\ini_< (Q) \cong \bigoplus_{k=1}^r I_k e_k$ as graded $S$-modules for suitable monomial ideals $I_k$ of $S$. According to \cite[Theorem 4.11]{Dube}, each $N_{S/I_k}$ admits a $0$-standard cone decomposition, and so $N_{S/I_k} e_k$ admits a $\deg (e_k)$-standard decomposition. Since $e$ is the maximum of $\deg (e_1),\ldots,\deg (e_r)$,  replacing successively any positive-dimensional cone of initial degree less than  $e$  in the decomposition of $N_{S/I_k} e_k$ by its fan decomposition, one obtains an $e$-standard decomposition of $N_{S/I_k} e_k$ (see \cite[Lemma 15]{Liang} for details). 
Putting these together, gives an  
$e$-standard decomposition of the subspace 
\[
N_M  =  \bigoplus_{k=1}^r N_{S/I_k} e_k. 
\]
Now one uses \Cref{alg:transform q-stand to q-exact} to transform this decomposition to an $e$-exact cone decomposition. 
\end{proof}

Dub\'e and Liang introduced Macaulay constants of a $q$-exact cone decomposition of a graded subspace of a free module.   We adapt the concept for a graded $S$-module $M$.  Recall that $e^+ (M)$ denotes the maximum degree of a minimal generator of $M$. 

\begin{propdef}
     \label{lemdef:Macaulay constants}
Let $M \neq 0$ be any finitely generated graded $S$-module. Denote its Krull dimension by $d = \dim M$.  Consider any $e^+ (M)$-exact cone decomposition of $M$, 
\begin{align}
    \label{eq:q-exact decomp, constants}
P =   \bigoplus_{k=1}^{t} h_{k, 0} K \  \oplus \   \bigoplus_{j=1}^{d}  \ \bigoplus_{k=b_{j+1}}^{b_j -1} h_{k, j} K[U_j].     
\end{align}
(Such a decomposition exits by \Cref{prop:existance q-decomp}.) 
Define 
\[
b_0  = 1 + e^+ (P). 
\]
If $d = 0$ then put $b_1  = e^+ (M)$. 

The integers $b_0,\ldots,b_{d+1}$ are uniquely determined by $e^+ (M)$ and the Hilbert function of $M$. Setting $b_i (M ) = b_i$, the numbers $b_0 (M),\ldots,b_{d+1} (M)$ are called the \emph{Macaulay constants of $M$}. 

Moreover, if $d \ge 1$ 
one has   $b_d (M) > b_{d+1}(M) = e^+ (M)$. 
\end{propdef}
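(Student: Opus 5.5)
The plan is to reduce everything to \Cref{lem:props q-exactness}, after first making sure the Krull dimension $d$ of $M$ is the same as the number of dimension levels in the decomposition.

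\textbf{Step 1: $d$ is the top cone dimension.} Fix an $e^+(M)$-exact cone decomposition $P$ of $N_M$, which exists by \Cref{prop:existance q-decomp}. The Hilbert function of $N_M$ equals that of $M$. A cone $hK[U]$ with $|U| = j$ has Hilbert function $\binom{k - \deg h + j-1}{j-1}$ in degree $k$. This is a polynomial in $k$ of degree $j-1$ with positive leading coefficient. Summing over the cones of $P$, the Hilbert polynomial of $M$ has degree $D-1$, where $D$ is the maximal dimension of a cone occurring in $P$. No cancellation can occur, since all leading coefficients are positive. Since $\deg p_M = \dim M - 1$, we get $D = d$. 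This also covers $d=0$: then $P^+ = 0$, all cones are $0$-dimensional, and $N_M$ is finite-dimensional. So the indexing $j = 1, \ldots, d$ in \eqref{eq:q-exact decomp, constants} is forced.

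\textbf{Step 2: uniqueness of the constants.} Assume $d \ge 1$. Then $N_M$ is infinite-dimensional, and \Cref{lem:props q-exactness} applies with $q = e^+(M)$. It says that $b_1, \ldots, b_{d+1}$ and $e^+(P)$ depend only on $q$ and the Hilbert function of $N_M$, that is, of $M$. Hence $b_0 = 1 + e^+(P)$ is also determined, and $b_{d+1} = q = e^+(M)$.

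For $d = 0$, the decomposition is $\bigoplus h_{k,0}K$, and the number of generators in each degree equals $h_M$. So $e^+(P)$ is the largest degree $j$ with $[M]_j \ne 0$, which again depends only on the Hilbert function. Also $b_1 = e^+(M)$ by definition.

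\textbf{Step 3: $b_d > b_{d+1}$.} Because $d$ is the maximal cone dimension, some cone of dimension $d$ occurs in $P$. The dimension-$d$ cones are indexed by $k = b_{d+1}, \ldots, b_d - 1$, so this range is nonempty, which gives $b_d - 1 \ge b_{d+1}$.

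\textbf{Main obstacle.} The only point needing care is Step 1. One must make sure the decomposition shape in \Cref{lem:props q-exactness} uses exactly $d = \dim M$ levels, with no empty top levels. Empty top levels are excluded by taking $d$ there to be the maximal cone dimension. The Hilbert polynomial degree comparison then identifies that number with $\dim M$.
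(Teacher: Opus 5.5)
Your proposal is correct and follows essentially the same route as the paper. Uniqueness comes from \Cref{lem:props q-exactness} with $q = e^+(M)$, and $b_d(M) > b_{d+1}(M)$ comes from $\deg p_M = d-1$ forcing at least one $d$-dimensional cone in $P$; your separate treatment of $d=0$ (where that lemma, stated for infinite-dimensional subspaces, does not literally apply) and your explicit identification of the top cone dimension with $\dim M$ just make precise details the paper leaves implicit.
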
 

\begin{proof}
The claimed uniqueness is a consequence of \Cref{lem:props q-exactness} with $q = e^+ (M)$. 
 
If $d \ge 1$ then the Hilbert polynomial of $M$ has degree $d-1$. In $P$, the only summands with a Hilbert polynomial of degree $d-1$ involve $U_d$ as $|U_j| = j$ by \Cref{lem:props q-exactness}. It follows that $b_d - 1 \ge b_{d+1}$, as claimed. 
\end{proof} 

\begin{remark}
   \label{rem:Macaulay constants compare} 
(i) If $d = \dim M \ge 1$ observe that $b_0 (M) -1 = e^+ (P) \ge e^+ (P^+) = b_1 (M) -1$, and so one has   
\[
b_0 (M) \ge b_1 (M) \ge \cdots \ge b_d (M) > b_{d+1} (M).  
\]

(ii) If $d  \ge 1$, then notice that  the number of $j$-dimensional cones in any $e^+(M)$-exact cone decomposition of $M$ is equal to $b_j (M) - b_{j+1} (M)$ for every $j = 1,\ldots,d$. 
   
(iii) Regardless of $d$, one always has $b_{d+1} (M) = e^+ (M)$.

(iv) As above, write $M \cong F_M/P_M$ such that $F_M$ is free of  minimum rank. In \cite[Definition 22]{Liang}, Liang defines  Macaulay constants $b_i (P)$ of a $q$-exact cone decomposition $P$ of $P_M$. If $q = e^+ (M)$ and $d = \dim M \ge 1$ then the above $b_i (M)$ are equal to Liang's integers $b_i (P)$ for $i = 0,\ldots,d+1$. Note that $b_1 (M) = 1 + e^+ (P^+)$ and $b_{d+1} (M) = e^+ (M)$  in this case. 

If $d = 0$ one has again $b_0 (M) = b_0 (P)$. However, if $d=0$ Liang defines the decomposition $P$ to be 0-exact, which gives $b_1 (P) = 0$. Our definition of $b_1 (M) = e^+ (M)$ in this case allows for a more uniform treatment (see, e.g., \Cref{prop:comp Macaulay const}). 

(v) Dub\'e and Liang define Macaulay constants $b_0 (P),\ldots,b_{n+1} (P)$ of a $q$-exact cone decomposition  $P$. Since $b_{d+1} (P) = \cdots = b_{n+1} (P)$, where $d = \dim (P)$,  the constants $b_i (P)$ with $i > d+1$ do not carry additional information, and we restrict ourselves to defining $b_0 (M),\ldots,b_{\dim M +1} (M)$.
\end{remark}

\begin{example}
    \label{exa:Mac constants principal} 
Consider a principal ideal $I =\langle f \rangle $ of $S$ generated by a homogeneous polynomial $f \neq 0$ of degree $a$. 

(i) For the $S$-module $I$, one has $e^+ (I) = a$, and $a$-exact cone decompositions
\[
I = f S \cong x_0^a S, 
\]
and so $b_0 (I) = \cdots = b_{n+1} (I) = a+1 = 1 + b_{n+2} (I)$. 

(ii) An $0$-exact cone decomposition of $S$ as an  $S$-module is 
\[
S \cong x_0^a S \oplus \bigoplus_{k = 0}^{a-1} x_0^k K[x_1,\ldots.x_n]. 
\]
Hence, if $a \ge 1$ then $S/I$ has a $0$-exact cone decomposition 
$S/I = \bigoplus_{k = 0}^{a-1} x_0^k K[x_1,\ldots.x_n]$. Since $e^+ (S/I) = 0$ this gives 
$b_0 (S/I) = \cdots = b_n (S/I) = a$ and  $ b_{n+1} (S/I) = 0$. 
\end{example}

Shifting the grading of a module changes its Macaulay constants predictably. Recall that, for any integer $k$, the degree shifted module $M(k)$ has the same underlying module structure as $M$ but its grading is given by $[M(k)]_j = [M]_{k+j}$ for any $j \in \Z$. 

\begin{lemma}
    \label{lem:constants and shifting}
For any integer $k$ and any finitely generated graded module $M \neq 0$, one has $b_i (M(k)) = b_i (M) -k$ for $i=0,1,\ldots,1+\dim M$. 
\end{lemma}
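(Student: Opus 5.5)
The plan is to transport an $e^+(M)$-exact cone decomposition of $M$ to one of $M(k)$ by relabeling degrees. Then I read off the Macaulay constants using the uniqueness part of \Cref{lemdef:Macaulay constants}.

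First I fix notation. Choose a minimal homogeneous generating set $g_1,\ldots,g_r$ of $M$ with a minimal epimorphism $\phi\colon F\to M$ and basis $e_1,\ldots,e_r$. The same elements minimally generate $M(k)$, now in degrees $\deg g_j - k$. Hence $F(k)\to M(k)$, given by the same map, is a minimal epimorphism. In particular $e^+(M(k)) = e^+(M)-k$ and $\dim M(k)=\dim M$.

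The kernel is $Q(k)$, and I fix the same monomial order on the monomials of $F(k)$, which are the same monomials as those of $F$. Then $\ini_<(Q(k)) = \ini_<(Q)$ as sets, so $N_{M(k)} = N_M$ as sets. A homogeneous element of degree $j$ in $N_M$ has degree $j-k$ in $N_{M(k)}$.

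Next I shift the decomposition. Take an $e^+(M)$-exact decomposition $P=\bigoplus h_i K[U_i]$ of $N_M$, as in \eqref{eq:q-exact decomp, constants}. The same direct sum is a cone decomposition of $N_{M(k)}$, and every cone generator has its degree lowered by $k$. Conditions (a) and (b) of \Cref{def:q-exactness} are translation invariant when $q$ is replaced by $q-k$. The exactness condition (distinct degrees of positive-dimensional generators) is also preserved. So this gives an $(e^+(M)-k)$-exact, that is, $e^+(M(k))$-exact, decomposition of $M(k)$.

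Its indexing is the one in \eqref{eq:q-exact decomp, constants} with every $b_j$ replaced by $b_j-k$ for $1\le j\le d+1$. Also $e^+$ of the shifted decomposition equals $e^+(P)-k$, so $b_0$ drops by $k$ as well. For $d=0$, $b_1(M(k)) = e^+(M(k)) = e^+(M)-k$ directly from the definition.

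Finally, uniqueness in \Cref{lemdef:Macaulay constants} says the constants do not depend on the chosen decomposition. It therefore gives $b_i(M(k)) = b_i(M)-k$ for all $i$.

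I do not expect a serious obstacle. The only point needing care is the verification that the identification $N_{M(k)}=N_M$ is legitimate, which rests on two facts. First, minimality of the epimorphism is unaffected by shifting. Second, the Macaulay constants are independent of the choices of $G$, $F$ and $<$; this independence is exactly what the uniqueness statement provides.
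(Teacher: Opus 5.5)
Your proposal is correct and takes the same approach as the paper. You transport an $e^+(M)$-exact cone decomposition of $M$ to $M(k)$ by lowering every generator degree by $k$, observe that it is $e^+(M(k))$-exact because $e^+(M(k)) = e^+(M)-k$, and invoke the uniqueness of the constants; your explicit identification of $N_{M(k)}$ with $N_M$ (regraded) and your separate handling of $b_1$ when $\dim M = 0$ just spell out details the paper's short proof leaves implicit.
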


\begin{proof}
Put $d = \dim M$ and consider any $e^+ (M)$-exact cone decomposition $M \cong   \bigoplus_{j=1}^{s} h_{j} K[U_j]$.  Shifting degrees, it gives
\[
M(k) \cong   \bigoplus_{j=1}^{s} h_{j} K[U_j] (k) = \bigoplus_{j=1}^{s} \tilde{h}_{j} K[U_j],    
\]
where $\tilde{h}_j$ is the same element as $h_j$, but its degree  is $\deg h - k$. Since $e^+ (M(k)) = e^+ (M) - k$ this shows that the last cone decomposition is $e^+ (M(k))$-exact, and so the claim follows. 
\end{proof}

Dub\'e had the remarkable insight that the degrees of the elements of a Gr\"obner basis of an ideal can be bounded above using cone decompositions (see \cite[Theorem 4.11]{Dube}). Liang established an extension to modules. 
We state it using our notation. 

\begin{thm}[{\cite[Theorem 20]{Liang}}]
    \label{thm:Groebner degree bound} 
If $Q \neq F$ is any graded submodule of a finitely generated graded $S$-module $F$ then  the degrees of the elements of a reduced Gr\"obner basis of $Q$ with respect to any monomial order on $F$ are at most $b_0 (F/Q)$. 
\end{thm}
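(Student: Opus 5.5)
The plan is to follow Dub\'e's original argument, transported to the free module $F = F_{F/Q}$ as Liang does. First we reduce to the situation of the paper's setup. Write $M = F/Q$. If the presentation $F \to M$ is not minimal, we first treat the minimal case. In general, we compare the reduced Gr\"obner basis of $Q$ with that of the kernel $Q_M$ of a minimal epimorphism. Since the statement is attributed to Liang, we mainly need to check that her Macaulay constant $b_0(P)$ of an $e^+(M)$-exact decomposition agrees with $b_0(M)$; by \Cref{rem:Macaulay constants compare}(iv) this holds.

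The core argument runs as follows. Let $g$ be an element of the reduced Gr\"obner basis with leading monomial $m = \ini_<(g)$. Then $m$ is a minimal generator of $\ini_<(Q)$, so $m = x_i m'$ with $m' \in N_M$ for each variable $x_i$ dividing $m$. We bound $\deg m$ in two steps.

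\textbf{Step 1.} We bound the degrees of the minimal generators of $\ini_<(Q)$ by $b_0$ of $F/\ini_<(Q)$. Here $F/\ini_<(Q)$ has the same Hilbert function and the same $e^+$ as $M$, so by \Cref{lem:props q-exactness} it has the same Macaulay constants. This reduces the problem to monomial submodules $\bigoplus_k I_k e_k$.

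\textbf{Step 2.} For a monomial submodule, we use an $e$-exact decomposition of $N_M$ built from $0$-standard decompositions of each $N_{S/I_k}$, as in the proof of \Cref{prop:existance q-decomp}. We then argue, as in Dub\'e's Theorem 4.11, that a minimal generator $m$ of $I_k e_k$ of degree $D$ forces the existence of a cone in the decomposition whose generator has degree at least $D-1$. The idea is that the elements $m/x_i$ lie in $N$ and cannot all belong to positive-dimensional cones containing $x_i$ in their variable set. Hence they appear either as $0$-dimensional cones or as generators of cones. It follows that $D-1 \le e^+(P) = b_0 - 1$.

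Finally, the non-leading terms of a reduced Gr\"obner basis element are homogeneous of the same degree, since $Q$ is graded and we may take homogeneous reduced bases. Thus the bound on leading monomials bounds the degrees of the elements.

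The main obstacle is Step 2. We must make sure that the standard-to-exact conversion in \Cref{alg:transform q-stand to q-exact} does not lower $e^+$ below the degree witnessed by the minimal generator. Fan decompositions only introduce generators of degree at least the original ones, which should suffice. The second thing to verify is that the shift to an $e^+(M)$-standard decomposition, rather than a $0$-standard one, preserves the inequality. If a direct argument is messy here, we would simply cite \cite[Theorem 20]{Liang} together with the identification in \Cref{rem:Macaulay constants compare}(iv).
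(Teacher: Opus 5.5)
The paper gives no argument of its own for this statement. It quotes \cite[Theorem 20]{Liang} and uses \Cref{rem:Macaulay constants compare}(iv) to identify Liang's $b_0(P)$ with $b_0(F/Q)$, which is exactly your fallback. Your self-contained route, however, has two gaps.

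\textbf{The non-minimal reduction.} This step cannot be carried out, because the statement fails when $F \to F/Q$ is not minimal. Take $F = Se_1 \oplus Se_2$ with $\deg e_1 = 0$, $\deg e_2 = 2$, and $Q = Se_2$. The reduced Gr\"obner basis is $\{e_2\}$, of degree $2$. On the other hand, $F/Q \cong S$ has the $0$-exact decomposition $1\cdot K[X]$, so $b_0(F/Q) = 1$. The theorem must therefore be read with $Q \subseteq \fm F$, i.e.\ $F = F_{F/Q}$, which is also the setting of Remark (iv). In that setting $\ini_<(Q) \subseteq \fm F$, and your Step 1 is correct: the two quotients have the same Hilbert function and the same $e^+$, hence the same $b_0$ by \Cref{lem:props q-exactness}.

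\textbf{The key inference in Step 2.} Knowing that $m/x_i$ cannot lie in a cone $hK[U]$ with $x_i \in U$ does not make $m/x_i$ a cone generator or a $0$-dimensional cone. It can equal $hu$ with $1 \neq u \in K[U]$ and $x_i \notin U$. For example, in $K[x,y,z]$ one has
\[
N_{(xyz)} = K \oplus xK[x,y] \oplus yK[y,z] \oplus zK[x,z].
\]
All generators here have degree at most $1$, yet $xyz$ has degree $3$. Moreover $yz \in yK[y,z]$, $xz \in zK[x,z]$ and $xy \in xK[x,y]$, so none of the $m/x_i$ is a generator.

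Your reasoning uses only that $N$ is spanned by standard monomials, so it would apply to this decomposition as well; something structural is missing. The exact conversion you flag is harmless, as you note, since fan decompositions never lower $e^+$. There are two ways to fill the gap.
\begin{enumerate}
\item Work with the decomposition produced by Dub\'e's splitting procedure. There a cone $hK[U]$ is split as $hK[U\setminus\{x_i\}] \oplus x_i hK[U]$. The ideal cone generated by a minimal generator $m$ therefore arises from some $h = m/x_i$. Its sibling, split further, ends in a cone of $N$ with generator $m/x_i$. Passing to $e$-standard and $e$-exact form can then only raise $e^+$.
\item Argue within the paper, without circularity. By Step 1 and \Cref{thm:reg-0 modules}, $\reg(F/\ini_<(Q)) < b_0(F/Q)$. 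Hence
\[
\reg(\ini_<(Q)) \le \max\{e^+(M),\, \reg(F/\ini_<(Q))+1\} \le b_0(F/Q),
\]
using $e^+(M) < b_0(M)$ from \Cref{prop:comp Macaulay const}. The leading monomials of the reduced Gr\"obner basis are the minimal generators of $\ini_<(Q)$, so their degrees are at most $b_0(F/Q)$.
\end{enumerate}
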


If $Q = I$ is an ideal  $S$ Dub\'e's  Theorem 8.2 in \cite{Dube} may be viewed as an upper bound on $b_0 (S/I)$ by a function in $e^+ (I)$ and $n$. 
His estimate was improved in \cite{MR, HSM-21, HSM-22} and extended to a bound for $b_0 (F/Q)$ in \cite[Theorem 37]{Liang}.

Later,  we will see that Macaulay constants other than $b_0 (M)$ also provide information about a module $M$.


\section{Macaulay Constants and Hilbert Polynomials } 
    \label{sect:Hilbert pol}

In this section, we establish an explicit formula for the Hilbert polynomial of a finitely generated graded module $M$ in terms of its Macaulay constants. It is used to derive further properties of Macaulay constants and for guarantees that the Hilbert function and the Hilbert polynomial are the same in certain degrees. 

We begin with discussing consequences of cone decompositions for the Hilbert function and Hilbert polynomial of a graded module. We use the conventions that $\sum_{ j = a}^b c_j$ is defined to be zero if $b < a$ and, for integers $m, k$ with $k \ge 0$, 
\[
\binom{m}{k} = \begin{cases}
\frac{m (m-1) \cdots (m-k+1)}{k!} & \text{ if } m \ge k \ge 1;\\
1 & \text{ if } m \ge k = 0;\\
0 & \text{ if } m < k. 
\end{cases}
\]
In particular, as a polynomial in $\Q[z]$, one has $\deg \binom{z}{k} = k$ if $k \ge 0$. 

The Macaulay constants of a module can be used to describe its Hilbert polynomial in a specific form. 

\begin{prop}
   \label{prop:Hilb pol with Macaulay const}
The Hilbert polynomial $p_M \in \Q[z]$ of a finitely generated graded $d$-dimensional $S$-module $M$ with Macaulay constants $b_1,\ldots,b_{d+1}$ is 
\begin{align}
    \label{eq:hilbPol with MacConstants-1} 
p_M (z)  = \sum_{j=1}^d \sum_{k = b_{j+1}}^{b_j - 1} \binom{z-k+j-1}{j-1}, 
\end{align}
which can also be written as 
\begin{align}
    \label{eq:hilbPol with MacConstants-2} 
p_M (z)  = \sum_{j=1}^d \bigg[\binom{z - e^+ (M)+j-1}{j} -  \binom{z-b_j+j-1}{j} \bigg]. 
\end{align}
Moreover, if $ d\ge 1$ then the Macaulay constants $b_1 (M),\ldots,b_d(M)$ are uniquely determined by $e^+ (M)$ and $p_M$. 
\end{prop}

\begin{proof}
It is well-known that $d = 0$ if only if $p_M$ is the zero polynomial and that $p_M$ has degree $d-1$ if $d \ge 1$. 
Thus, if $d=0$ both claims are true since the sums  are defined to be zero if $d < 1$. 

Assume $d  \ge 1$. Since a cone $C = h K[U]$ with $U \neq \emptyset$ has Hilbert polynomial $p_C (z) = \binom{z- \deg h + |U|-1}{|U| -1}$, any $e^+ (M)$-exact cone decomposition \eqref{eq:q-exact decomp, constants} of $M$ immediately gives 
\begin{align}
p_M (z) &  =  \sum_{j=1}^d \ \sum_{k = b_{j+1}}^{b_j - 1} \binom{z- \deg (h_{k, j}) +|U_j|-1}{|U_j|-1} \\
& = \sum_{j=1}^d \sum_{k = b_{j+1}}^{b_j - 1} \binom{z-k+j-1}{j-1},  
\end{align}
where we use $\deg (h_{j, k}) = k$ and $|U_j| = j$  for the second equality (see \Cref{lem:props q-exactness}).    

Next, we establish Equation \eqref{eq:hilbPol with MacConstants-2} (see also \cite[Section 7]{Dube} for a similar computation). 
For any integers $c \ge 0$ and $s \gg 0$, one has the following well-known identity 
\[
\sum_{k=0}^{c-1} \binom{s+k}{k} = \binom{s+c}{c-1}, 
\]
and so for polynomials in $\Q[z]$, 
\begin{align}
    \label{eq:easy identity}
\sum_{k=0}^{c-1}  \binom{z+k}{k} = \binom{z+c}{c-1}. 
\end{align} 
Considering now integers $c \ge a$, $i \ge 0$ and $s \gg 0$, one computes
\begin{align*}
\sum_{k=a}^{c-1}  \binom{s-k+i}{i} & = \sum_{j = s-c+1}^{s-a} \binom{j+i}{i} \\
& = \binom{i + s-a+1}{s-a} - \binom{i + s-c+1}{s-c} \\
& =  \binom{ s-a+i+1}{i+1} - \binom{s-c+i+1}{i+1}, 
\end{align*}
where one uses the first identity above to get the second line. For polynomials in $\Q[z]$, the latter identity implies
\[
\sum_{k=a}^{c-1}  \binom{z-k+i}{i} =  \binom{ z-a+i+1}{i+1} - \binom{z-c+i+1}{i+1}. 
\]
Applying the last identity to the inner sums in Equation \eqref{eq:hilbPol with MacConstants-1}, one gets
\begin{align*}
p_M (z) 
& = \sum_{j=1}^d  \bigg[\binom{z - b_{j+1}+j}{j} -  \binom{z-b_j+j}{j} \bigg]. 
\end{align*}
Combining consecutive summands beginning with the second summand gives
\begin{align*}
p_M (z) & = \binom{z - b_{d+1}+d}{d} - 1 -  \sum_{j=1}^d  \binom{z - b_{j}+j-1}{j} \\
& = \sum_{j=1}^{d} \binom{z - b_{d+1} -1 + j}{j } -  \sum_{j=1}^d  \binom{z - b_{j}+j-1}{j},    
\end{align*}
where one invokes Identity \eqref{eq:easy identity} to obtain the second line. 
Recalling that $b_{d+1} (M) = e^+ (M)$, the desired Equation \eqref{eq:hilbPol with MacConstants-2} follows. 

The final assertion follows by observing that each summand $\big[\binom{z - e^+ (M)+j-1}{j} -  \binom{z-b_j+j-1}{j} \big]$ in Equation \eqref{eq:hilbPol with MacConstants-2} 
is either zero or has degree $j-1$. 
\end{proof} 

As an immediate consequence, one identifies the weighted top coefficient of the Hilbert polynomial of $M$, which is called the \emph{degree} or \emph{multiplicity} of $M$ and denoted $\deg M$. 

\begin{cor}
    \label{cor:identify b_d}
If $d = \dim M \ge 1$ then $b_d (M) = \deg M + e^+ (M)$.
\end{cor}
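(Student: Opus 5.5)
The plan is to read off the leading coefficient of $p_M$ from Equation \eqref{eq:hilbPol with MacConstants-1} in \Cref{prop:Hilb pol with Macaulay const}. Recall that $\deg M$ is defined by writing the leading term of $p_M$ as $\frac{\deg M}{(d-1)!} z^{d-1}$, since $p_M$ has degree $d-1$ when $d \ge 1$.

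In \eqref{eq:hilbPol with MacConstants-1}, each summand $\binom{z-k+j-1}{j-1}$ is a polynomial in $z$ of degree $j-1$ with leading coefficient $\frac{1}{(j-1)!}$. Hence only the summands with $j = d$ contribute to the coefficient of $z^{d-1}$. There are exactly $b_d - b_{d+1}$ of them, namely those with $k = b_{d+1}, \ldots, b_d - 1$; this count is nonnegative, indeed positive, by \Cref{lemdef:Macaulay constants}. Each contributes $\frac{1}{(d-1)!}$, so the coefficient of $z^{d-1}$ in $p_M$ equals $\frac{b_d(M) - b_{d+1}(M)}{(d-1)!}$. This gives $\deg M = b_d(M) - b_{d+1}(M)$.

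Finally, substitute $b_{d+1}(M) = e^+(M)$ (\Cref{lemdef:Macaulay constants}, or \Cref{rem:Macaulay constants compare}(iii)) to obtain $b_d(M) = \deg M + e^+(M)$.

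Alternatively, one can use \eqref{eq:hilbPol with MacConstants-2}. Its $j = d$ bracket equals $\binom{z - e^+(M)+d-1}{d} - \binom{z-b_d+d-1}{d}$, and the $z^d$ terms cancel. Its $z^{d-1}$ coefficient is therefore $\frac{1}{d!}\cdot d\,(b_d - e^+(M))$, which gives the same result.

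There is no real obstacle. The only point needing care is the normalization of the multiplicity, $\deg M = (d-1)!$ times the leading coefficient of $p_M$, together with the fact that the summands with $j<d$ have degree less than $d-1$.
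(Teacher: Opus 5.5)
Your proposal is correct and follows essentially the same route as the paper. The paper likewise writes $p_M(z) = \deg M \cdot \binom{z}{d-1} + (\text{lower order terms})$, reads off $\deg M = b_d(M) - b_{d+1}(M)$ from the $j=d$ summands of \eqref{eq:hilbPol with MacConstants-1}, and then uses $b_{d+1}(M) = e^+(M)$ from \Cref{lemdef:Macaulay constants}.
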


\begin{proof}
It is well-known that $p_M (z)$ can be written as $\deg M \cdot \binom{z}{d-1}$ plus lower order terms in $z$. Hence Equation~\eqref{eq:hilbPol with MacConstants-1}  gives $\deg M = b_d (M) - b_{d+1} (M) = b_d (M) - e^+ (M)$, where we also used  \Cref{lemdef:Macaulay constants}. 
\end{proof}

If $M = S/I$ is a graded $K$-algebra then one often abuses notation and defines $\deg I = \deg S/I$. Thus, \Cref{cor:identify b_d} yields
$b_d (S/I) = \deg I$ if $\dim S/I \ge 1$.

Truncating a module can considerably change the Macaulay constants. 

\begin{example}
Consider the module $M = x S \cong S(-1)$ with  $S = K[x, y]$. 
Its Macaulay constants are 
$b_0 (M) = b_1 (M) = b_{2} (M) = 2$ and $b_{3} (M) = e^+ (M) = 1$. 
The fan decomposition of $x S$ shows that the truncation $M_{\ge 2} = x \cdot (x, y)$  has a 2-standard cone decomposition $x^2 K[x] \oplus xy K[x, y]$. 
\Cref{alg:transform q-stand to q-exact} converts it to a 2-exact cone decomposition 
\[
M_{\ge 2} \cong x^2 K  \oplus x^3 K[x] \oplus xy K[x,y]. 
\]
Hence, the Macaulay constants of $M_{\ge 2}$ are 
$b_{3} (M_{\ge 2})  = e^+ (M_{\ge 2} ) = 2$, \ $b_2 (M_{\ge 2}) = 3$  and $b_1 (M_{\ge 2})  = b_0 (M_{\ge 2}) = 4$. 
\end{example}

However, truncating in low enough degrees, the Macaulay constants with positive index remain the same. This is a special case of the 
following observation whose argument relies on \Cref{prop:Hilb pol with Macaulay const}.  The result  will be used later on. 

\begin{lemma}
   \label{lem:Macaulay cont for direct sum}
Consider finitely generated graded $S$-modules $M$ and $N$. Then one has:
\begin{itemize}

\item[{\rm (a)}]  If $N$ is a zero-dimensional submodule of $M$ and $e^+ (M/N) = e^+ (M)$ then $b_i (M/N) = b_i (M)$ for every $i = 1,2,\ldots, \dim M + 1$. 

\item[{\rm (b)}] If $\dim N = 0$ and 
$e^+ (N) \le  e^+ (M)$  then $b_i (M \oplus N) = b_i (M)$ for every $i = 1,2,\ldots, \dim M + 1$. 

\item[{\rm (c)}] If $\dim N \ge 1$ and 
$e^+ (N) \le  e^+ (M)$ then 
$
b_i (M) \le b_i (M \oplus N)  \text { for every }  i = 1,\ldots, \dim M + 1. 
$ 
\end{itemize} 

\end{lemma}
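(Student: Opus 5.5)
The plan is to compare Hilbert polynomials through \Cref{prop:Hilb pol with Macaulay const}, which says that for $\dim M \ge 1$ the constants $b_1(M),\ldots,b_d(M)$ are determined by $e^+(M)$ and $p_M$. Since $b_{d+1}$ always equals $e^+$ (\Cref{rem:Macaulay constants compare}(iii)), the top index needs only the equality or inequality of the $e^+$.

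For (a), the exact sequence $0 \to N \to M \to M/N \to 0$ with $\dim N = 0$ gives $p_{M/N} = p_M$. Since $\dim N=0$, the quotient has the same Krull dimension, because $N$ has finite length and $\mathrm{Supp}(M/N)$ differs from $\mathrm{Supp}(M)$ only possibly at $\fm$. If $d = \dim M \ge 1$, the two modules share $e^+$ and $p$, so the uniqueness statement of \Cref{prop:Hilb pol with Macaulay const} gives $b_i(M/N)=b_i(M)$ for $1\le i\le d$, and $b_{d+1}$ agrees by (iii). If $d=0$, only $i=1$ occurs, and $b_1=e^+$ for both modules by definition. For (b), $p_{M\oplus N}=p_M+p_N=p_M$ and $\dim(M\oplus N)=\dim M$ when $\dim M\ge 1$. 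If $\dim M = 0$, then $M\oplus N$ is also zero-dimensional. Also $e^+(M\oplus N)=\max(e^+(M),e^+(N))=e^+(M)$, since a minimal generating set of the direct sum is the union of minimal generating sets of the summands. The same argument as in (a) then concludes.

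For (c), set $e=e^+(M)=e^+(M\oplus N)$, $d=\dim M$, $d'=\dim(M\oplus N)=\max(d,\dim N)\ge d$, and $p_{M\oplus N}=p_M+p_N$. I would use formula \eqref{eq:hilbPol with MacConstants-2} and write
\[
p_M(z)=\sum_{j\ge 1} g_{b_j(M)}^{(j)}(z),\qquad g_b^{(j)}(z)=\binom{z-e+j-1}{j}-\binom{z-b+j-1}{j},
\]
with $b_j(M)=e$ for $j>d$, so that the extra summands vanish. Apply the same formula to $N$. Its terms use $e^+(N)\le e$ rather than $e$, so I first rewrite them relative to $e$ by telescoping with \eqref{eq:hilbPol with MacConstants-1}. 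The honest route is the cone picture: take $e$-exact cone decompositions of $M$ and $N$. For $N$, first use Liang's fan trick (as in \Cref{prop:existance q-decomp}) to get an $e$-standard decomposition of $N_N$. The direct sum of the two is then an $e$-standard decomposition of $N_{M\oplus N}$, because $F_{M\oplus N}=F_M\oplus F_N$ and the initial module splits. \Cref{alg:transform q-stand to q-exact} then converts this into an $e$-exact decomposition.

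The key step, and the main obstacle, is a monotonicity claim: in the conversion, the algorithm never decreases the number of cones of dimension $\ge j$ lying in degrees $\ge e$, and more precisely $b_j-e$ counts the $e$-exact cones of dimension $\ge j$. Starting from $M$'s $e$-exact decomposition, which already contributes $b_j(M)-e$ cones of dimension $\ge j$, adding $N$'s cones and then performing fan replacements only replaces a cone of dimension $s$ by one cone of each dimension $0,\ldots,s$. The cones of maximal dimension in each degree survive, so for each degree $k$ in $[e,b_j(M)-1]$ the final decomposition still has a cone in degree $k$ of dimension $\ge j$. Since the final decomposition is $e$-exact, with degrees $e,\ldots,b_j-1$ for cones of dimension $\ge j$, this forces $b_j(M\oplus N)\ge b_j(M)$. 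I expect the verification that the maximal-dimension cone in each degree is kept to be the delicate point. It should follow from the rule that the algorithm always fans the cone of minimum dimension, and that fanning a cone produces a cone of the same dimension one degree higher, so the degree coverage required by $e$-standardness is preserved.
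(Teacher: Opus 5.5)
Your proposal is correct and follows essentially the paper's route. Parts (a) and (b) come from equality of Hilbert polynomials and of $e^+$ together with the uniqueness in \Cref{prop:Hilb pol with Macaulay const}. For (c), you convert the $e$-standard sum of decompositions with \Cref{alg:transform q-stand to q-exact} and note that fanning never reduces the number of positive-dimensional cones of dimension $\ge j$, which in an $e$-exact decomposition equals $b_j - e$. The paper reaches $e^+(N)=e$ by truncation instead of Liang's fan trick, which is an inessential difference. That counting observation already gives (c), so the ``delicate'' degree-coverage check you flag is not needed, though it also holds because the surviving cone at each degree has maximal dimension among those present.
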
 

\begin{proof}

Claims (a) and (b) follow by \Cref{prop:Hilb pol with Macaulay const} 
because the assumptions guarantee that  $M/N$ and $M \oplus N$ have the same Hilbert polynomial as $M$ and the maximum degree of a minimal generator of $M/N$ as well as  of 
$M \oplus N$ is $e^+ (M)$. 

It remains to show Assertion (c). The assumption on $e^+ (N)$ implies that $e^+ (M \oplus N) = e^+ (M)$, which we denote by $e$. 
By (a), passing from $M$ to the truncation $M_{\ge e}$ does not change the Macaulay constants with positive index. This is analogously true when passing from $M \oplus N$ to $(M \oplus N)_{\ge e}$. 
 Thus, we may assume that $e = e^+ (M) = e^+ (N)$. 
 
Denote by $P$ and $Q$ an $e$-exact cone decomposition of $M$ and $N$, respectively. Thus, $P \oplus Q$ is an $e$-standard cone decomposition of $M \oplus N$, which we convert to an $e$-exact cone decomposition $\mathcal{P}$ using \Cref{alg:transform q-stand to q-exact}. If this algorithm replaces a cone $C$ by its fan decomposition $F(C)$, then the initial degree $a(C)$ of $C$ is the minimum degree $k$ such that there are two cones in the composition at hand with the same dimension and initial degree $k$, and $C$ is one of them. In $F(C)$, there is exactly one cone of dimension $j$ for every $j = 1,\ldots,\dim C$, and all these positive-dimensional cones have initial degree $1 + k = 1+ a(C)$. Taking into account \Cref{rem:Macaulay constants compare}(ii), this implies Claim (c). 
\end{proof}

The above results lead to a characterization of Hilbert polynomials of finitely generated graded $S$-modules. Recall that the Hilbert polynomial of such a module $M$ is zero if and only if $\dim M = 0$ or, equivalently, $M$ is finite-dimensional $K$-vector space.

\begin{thm}
   \label{thm:char hilb pol}
For any integer $e$ and any polynomial $p \neq 0$ in $\Q[z]$, the following conditions are equivalent: 
\begin{itemize}

\item[{\rm (a)}] $p (z) =  \sum_{j=1}^d \big[\binom{z - e+j-1}{j} -  \binom{z-b_j+j-1}{j} \big]$ 
with integers $d \ge 1$ and  $b_1 \ge \cdots \ge b_d >  e$; 

\item[{\rm (b)}] There is a graded module $M$ with $e^+ (M) = e$, $d = \dim M \ge 1$ and an $e$-exact cone decomposition 
$M \cong \bigoplus_{k=1}^{t} h_{k, 0} K \  \oplus \   \bigoplus_{j=1}^{d}  \ \bigoplus_{k=b_{j+1}}^{b_j -1} h_{k, j} K[U_j]$, where $b_{d+1} = e$. 

\item[{\rm (c)}] There is a graded module $M$  with with $e^+ (M) = e$, $d = \dim M \ge 1$ and Macaulay constants $b_i (M) = b_i$ for $i=1,\ldots,d$  \; 

\item[{\rm (d)}] There is a graded module $M$  with $e^+ (M) = e$, $d = \dim M \ge 1$ and Hilbert polynomial $p_M = p$; and 

\item[{\rm (e)}] There is a graded cyclic module $M$  with $e^+ (M) = e$, $d = \dim M \ge 1$ and Hilbert polynomial $p_M = p$.  
\end{itemize}

\end{thm}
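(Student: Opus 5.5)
We prove the equivalences by running the cycle (e) $\Rightarrow$ (d) $\Rightarrow$ (c) $\Rightarrow$ (b) $\Rightarrow$ (a) $\Rightarrow$ (e).

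\emph{The easy implications.} (e) $\Rightarrow$ (d) is trivial. For (d) $\Rightarrow$ (c), take the module $M$ of (d) and set $b_i = b_i(M)$ for $i = 1, \ldots, d$. By \Cref{prop:Hilb pol with Macaulay const}, these Macaulay constants are uniquely determined by $e$ and $p_M = p$. Hence, with this choice, (c) is just a restatement of (d), and the integers $b_i$ in (a)–(c) are the same throughout.

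For (c) $\Rightarrow$ (b), \Cref{prop:existance q-decomp} gives an $e$-exact cone decomposition of $M$. By \Cref{lem:props q-exactness} and the definition in \Cref{lemdef:Macaulay constants}, it has the displayed form with $b_{d+1} = e$.

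For (b) $\Rightarrow$ (a), we use \Cref{prop:Hilb pol with Macaulay const}, Equation \eqref{eq:hilbPol with MacConstants-2}, applied to this decomposition. It expresses $p = p_M$ in the required form. The inequalities $b_1 \ge \cdots \ge b_d > e$ follow from \Cref{rem:Macaulay constants compare}(i) and \Cref{lemdef:Macaulay constants}.

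\emph{The main step: (a) $\Rightarrow$ (e).} Given $d \ge 1$ and integers $b_1 \ge \cdots \ge b_d > b_{d+1} := e$, we must construct a cyclic module with these data. By \Cref{lem:constants and shifting}, we may shift degrees and assume $e = 0$. Then we need a cyclic module $S/I$ generated in degree $0$, of dimension $d$, whose Hilbert polynomial is
\[
\sum_{j=1}^d \sum_{k=b_{j+1}}^{b_j-1} \binom{z-k+j-1}{j-1},
\]
by the equivalence of \eqref{eq:hilbPol with MacConstants-1} and \eqref{eq:hilbPol with MacConstants-2} established in the proof of \Cref{prop:Hilb pol with Macaulay const}.

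\emph{The construction.} We use a lex-segment-like monomial ideal. Write $S = K[x_1, \ldots, x_n]$; we need $d \le n$, and we assume this, enlarging $S$ if necessary, since the statement allows any polynomial ring. We want a monomial set $N$, closed under taking divisors, equal to the disjoint union of the cones
\[
x_1^{c_1}\cdots x_{n-j}^{c_{n-j}}\, K[x_{n-j+1},\ldots,x_n], \qquad j = 1, \ldots, d,
\]
with $b_j - b_{j+1}$ cones of dimension $j$, arranged by a Macaulay-style staircase. Concretely, we place $x_{n-d}^{k}K[x_{n-d+1},\ldots,x_n]$ for $0 \le k < b_d$. Then, recursively, we place further cones of dimension $d-1$ in the variables $x_{n-d+2}, \ldots, x_n$, multiplied by $x_{n-d}^{b_d}$ times powers of $x_{n-d+1}$, and so on down to dimension $1$. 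The complement of $N$ is an ideal $I$, and $S/I$ is cyclic with $e^+ = 0$.

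\emph{Checking the Hilbert polynomial.} It suffices that the Hilbert polynomial of $S/I$ equals the cone count: $b_j - b_{j+1}$ cones of dimension $j$, with generators in degrees $b_{j+1}, \ldots, b_j - 1$. Any finite discrepancy, in particular extra zero-dimensional pieces, does not affect $p_M$. Two things must be verified. First, the chosen generators of the $j$-dimensional cones have the stated degrees; this follows by arranging that the cone generators are $x_{n-d}^{b_d}\,x_{n-d+1}^{b_{d-1}-b_d}\cdots$ times a running power. Second, $N$ is an order ideal, i.e.\ closed under divisibility, and this is the main obstacle.

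\emph{Fallback.} If the explicit staircase proves messy, we instead invoke Macaulay's classical characterization. Via \Cref{lem:props q-exactness}, the Gotzmann-type representation from (a) is exactly the form of Hilbert polynomials of standard graded $K$-algebras $S/I$ (Gotzmann's persistence/representation). The Gotzmann representation of $p$ has $b_d - e$ terms of degree $d-1$, and so on, so a lex ideal with Hilbert polynomial $p$ exists. Its quotient is cyclic, generated in degree $0$, and has dimension $d$.
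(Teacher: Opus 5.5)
Your proposal is correct and follows essentially the paper's route: the easy implications use \Cref{prop:Hilb pol with Macaulay const}, \Cref{prop:existance q-decomp} and the uniqueness in \Cref{lemdef:Macaulay constants}, and (a) $\Rightarrow$ (e) shifts to $e=0$ and uses a saturated lex ideal, which the paper cites from \cite[Theorem 2.3]{MN} (this is your fallback). Your explicit staircase is that same lex ideal, and the step you flag as the main obstacle is immediate: $N$ is the set of monomials in $x_{n-d},\ldots,x_n$ whose exponent vector $(\alpha_{n-d},\ldots,\alpha_{n-1})$ is lexicographically smaller than $(b_d-b_{d+1},\ldots,b_1-b_2)$, and lowering exponents componentwise preserves this, so $N$ is closed under divisors (note that the construction needs $n\ge d+1$ variables, not merely $d\le n$).
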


\begin{proof}
Note that (b) implies (c) by \Cref{lemdef:Macaulay constants}. 

\Cref{prop:Hilb pol with Macaulay const} shows that (c) gives (d).  

Condition (a) is a consequence of (d)  because the Macaulay constants satisfy $b_1 (M) \ge \cdots \ge b_{d} (M) > e^+ (M)$  by \Cref{lem:props q-exactness} and Proposition~\ref{lemdef:Macaulay constants}. 

Next, we show that (a) implies (e). Passing from $p(z)$ to the polynomial $p (z+e)$ and using \Cref{lem:constants and shifting}, it suffices to find a homogeneous ideal $I$ of a polynomial ring $S$ such that $p_{S/I}$  is equal to  $q(z) = \sum_{j=1}^d \big[\binom{z+j-1}{j} -  \binom{z-\tilde{b}_j+j-1}{j} \big]$ 
with integers $\tilde{b}_1 \ge \cdots \ge \tilde{b}_d >  0$. For example, the lexicographic ideal $L_q$ to $q$ has this property (see \cite[Theorem 2.3]{MN}). 

Obviously, (d) is a consequence of (e). 

(d) implies (b) by the existence result of \Cref{prop:existance q-decomp} and  the uniqueness properties established in Proposition~\ref{lemdef:Macaulay constants}. 
\end{proof}

The equivalence of (a) and (d) in \Cref{thm:char hilb pol} could, with some work, also be derived from the characterization of Hilbert functions of graded modules (see \cite[Theorem 4.2]{Gasharov}) that generalizes Macaulay's characterization for graded $K$-algebras. However, we are not aware that the explicit form of  Hilbert polynomials as given in (a)   has been previously established in the literature in the above generality.

Following \cite[Definition 18]{torino-school}, we recall the following concept. 

\begin{definition}
   \label{def:reg index}
The \emph{regularity index} of a finitely generated graded $S$-module $M \ne 0$ is the number 
\[
r (M) = \min \{k \in \Z \; \mid \; h_M (j) = p_M(j) \text{ for any } j \ge k \}. 
\]
For the zero module $M$, we set $r (M) = - \infty$. 
\end{definition}

For a homogeneous ideal $I$ of $S$, Mayr and Ritscher \cite{MR} refer to the regularity index of $S/I$ as the Hilbert regularity of $I$ and denote it by $\reg (I)$. However, we reserve  $\reg$ for denoting the Castelnuovo-Mumford regularity. As observed below, the two concepts are related (see \Cref{prop:compare reg and reg index}).  We first note though that the regularity index is bounded above by a Macaulay constant. 

\begin{prop}
    \label{prop:reg index bound by Mac constants}
For any finitely generated graded $S$-module $M \neq 0$, one has 
\begin{itemize}

\item[{\rm (a)}] If $\dim M = 0$ then $r(M) = b_0 (M)$. 

\item[{\rm (b)}] If $\dim M \ge 1$ then $b_0 (M) = \max \{ r(M), b_1 (M) \}$. 
\end{itemize}
In particular, one always has $r (M ) \le b_0 (M)$. 
\end{prop}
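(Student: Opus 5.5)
Fix an $e^+(M)$-exact cone decomposition $P$ of $M$ as in \eqref{eq:q-exact decomp, constants}; by \Cref{lemdef:Macaulay constants} and \Cref{lem:props q-exactness}, $b_0(M) = 1 + e^+(P)$. Since $M \cong N_M = P$ as graded vector spaces, $h_M(j) = h_P(j) = h_{P^+}(j) + \#\{k \mid \deg h_{k,0} = j\}$ for all $j$. The plan is to compare $h_P$ with $p_M$ degree by degree.

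\emph{Case (a): $\dim M = 0$.} Here $P^+ = 0$ and $p_M = 0$, so $h_M(j) = p_M(j)$ exactly when $[M]_j = 0$. We have $e^+(P) = \max\{j \mid [M]_j \neq 0\}$, so $r(M) = 1 + e^+(P) = b_0(M)$.

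\emph{Case (b): $\dim M \ge 1$.} We first show that $h_{P^+}(j) = p_M(j)$ for every $j \ge b_1(M) - 1$, and in fact for all $j \ge b_{d+1}-1$ in a suitable form. Each cone $h_{k,j}K[U_j]$ with $j \ge 1$ has Hilbert function $\binom{z-k+j-1}{j-1}$, and this agrees with its Hilbert polynomial for all $z \ge k - j + 1$, in particular for all $z \ge k$ (and the function vanishes for $z<k$). Summing over the positive-dimensional cones gives $h_{P^+}(z) = p_M(z)$ whenever $z \ge \max_k \deg h_{k,j} = b_1 - 1$.

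Now split $h_M = h_{P^+} + h_{P_0}$, where $P_0 = \bigoplus_k h_{k,0}K$. For $j \ge b_1 - 1$ we get $h_M(j) - p_M(j) = h_{P_0}(j)$.

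The inequality $b_0 \ge r(M)$ follows directly: for $j \ge b_0 = 1 + e^+(P)$ we have $j \ge b_1$ by \Cref{rem:Macaulay constants compare}(i) and $h_{P_0}(j) = 0$, so $h_M(j) = p_M(j)$. Also $b_0 \ge b_1$ by that remark, so $b_0 \ge \max\{r(M), b_1\}$.

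For the reverse inequality, suppose $b_0 > b_1$. Then $e^+(P) = b_0 - 1 \ge b_1$ exceeds $e^+(P^+) = b_1 - 1$, so $e^+(P)$ is attained by some $h_{k,0}$. Hence $h_{P_0}(b_0 - 1) > 0$, and since $b_0 - 1 \ge b_1 - 1$ this gives $h_M(b_0-1) \neq p_M(b_0-1)$. Therefore $r(M) \ge b_0$, and we conclude $b_0 = \max\{r(M), b_1\}$.

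The "in particular" statement follows from (a) and (b). The main care needed is the claim that the Hilbert function of a positive-dimensional cone equals its polynomial in all degrees $\ge$ its generator degree, given the paper's binomial convention. For $z = k$ the value is $\binom{j-1}{j-1} = 1$, which is correct, and for $z > k$ the top argument is at least $j-1$, so the convention agrees with the polynomial there.
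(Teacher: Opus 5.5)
Your proof is correct and follows essentially the paper's argument: both show $h_{P^+}(j)=p_M(j)$ for $j\ge b_1(M)-1$, so that in these degrees $h_M(j)-p_M(j)$ is exactly the contribution of the zero-dimensional cones. The only difference is in how (b) is organized: you split on whether $b_0(M)>b_1(M)$ and use the top zero-dimensional cone as a witness in degree $b_0(M)-1$, while the paper splits on whether $r(M)>b_1(M)$. Separately, your aside that the agreement holds ``for all $j\ge b_{d+1}-1$ in a suitable form'' is false as literally stated (e.g.\ $M=S/(f)$ with $S=K[x,y]$ fails at $j=-1$); since you never use it, simply delete it.
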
 

\begin{remark}
In the special case where $M = S/I$, the estimate $b_0 (M) \le \max \{ r(M), b_1 (M) \}$ has been shown in \cite[Lemma 23]{MR}. 
\end{remark}

\begin{proof}[Proof of \Cref{prop:reg index bound by Mac constants}]

We begin by establishing $r(M) \le b_0 (M)$. To this end
consider any cone $C = h K[U]$ appearing in an $e^+ (M)$-exact cone decomposition $P$ of $M$ (see \Cref{lemdef:Macaulay constants}). If $U \ne \emptyset$ its Hilbert polynomial is $p_C (z) = \binom{z- \deg h + |U|-1}{|U| -1}$. Hence $p_C (j)$ is equal to $h_C (j)$ if $j \ge \deg h - |U| + 1$.  Since, by definition, $b_i (M) - 1$ is the maximum degree of a cone with dimension $i = |U|$ appearing in the decomposition $P$, we conclude that $p_C(j) = h_C (j)$ if $j \ge b_i (M) - i$ and $|U| = i \ge 1$. In particular, this shows that if $P^+ \neq 0$ then one gets for its Hilbert function
\begin{equation}
    \label{eq:rex index postive part}
h_{P^+} (j ) = p_{P^+} (j) = p_M (j) \quad \text{ if } j \ge b_1 (M) -1. 
\end{equation}

If $U = \emptyset$ then the Hilbert function of $C$ is
\begin{equation}
    \label{eq:hilb finite cone}
h_C (j) = \begin{cases}
1 & \text{ if } j = \deg h, \\
0 & \text{ otherwise}. 
\end{cases}
\end{equation}
Since $p_C$ is the zero polynomial this gives $h_C (j) = p_C (j)$ if $j \ge 1 + \deg h$. By definition of $b_0 (M)$, we have $\deg h < b_0 (M)$. Combined with Equation~\eqref{eq:rex index postive part} and $b_0 (M) \ge b_1 (M)$,   
it follows that $h_M (j) = p_M (j)$ whenever $j \ge b_0 (M)$, and so 
$r(M) \le b_0 (M)$. 

Now, we prove Claim (a). If $\dim M = 0$ then $P^+ = 0$. Since, by definition, $b_0 (M) = 1 + e^+ (P)$, there is a cone $C = h K$ with $\deg h = b_0 (M) -1$ in $P$. Thus, Formula \eqref{eq:hilb finite cone} shows that $h_M (b_0 (M) -1) \ge 1 >  0 = p_M (b_0 (M) -1)$, which implies $r (M) \ge b_0 (M)$, and so the claimed equality follows. 

For establishing Claim (b), 
we consider two cases. 

If $r (M) > b_1 (M)$ then Equation~\eqref{eq:rex index postive part} shows that there must be a cone $C = h K$ in $P$ with $\deg h = r (M) -1$ and there is no zero-dimensional cone in $P$ whose initial degree  is at least $r(M)$. Since $e^+ (P^+) \le b_1 (M) - 1$, the assumption $r (M) > b_1 (M)$ implies  that $e^+ (P) = r(M)-1$, and so 
$b_0 (M) = e^+ (P) + 1 = r(M)$, as desired. 

If $r (M) \le b_1 (M)$ then, using again Equation~\eqref{eq:rex index postive part}, we obtain for $j \ge b_1 (M)$, 
\[
p_{P^+} (j) = h_{P^+} (j) \le h_M (j) = p_M (j) = p_{P^+} (j), 
\]
and so $h_{P^+} (j) = h_M (j)$, which shows that there is no zero-dimensional cone in $P$ whose initial degree is at least $b_1 (M)$. We conclude that $e^+ (P) = b_1 (M) -1$, and so $b_0 (M) = e^+ (P) +1 = b_1 (M)$, as claimed. 
\end{proof}

Denote the $i$-th local cohomology module of an $S$-module $M$ with support in the homogeneous maximal ideal $\fm$ of $S$ by 
$H^i_\fm (M)$. It is a graded module if $M$ is. 
The \emph{Castelnuovo-Mumford regularity} of $M \neq 0$ is 
\[
\reg (M) = \min \{ m \in \Z \; \mid \; [H^i_\fm (M)]_j = 0 \text{ whenever } j > m-i \}.
\]
This can be rewritten more concisely by defining the \emph{end} of a graded vector space $V$ as 
\[
e(V) = \sup \{j \in \Z \; \mid \; [V]_j \neq 0 \}. 
\]
In particular, $e(0) = - \infty$. Thus, one has 
\[
\reg (M) = \max \{ i + e (H^i_\fm (M) \; \mid \; i \ge 0 \}. 
\]
Following \cite{N-90}, we define the \emph{$k$-regularity}  $\reg_k (M)$ of $M$ as 
\[
\reg_k (M) = \max \{ i + e (H^i_\fm (M) \; \mid \; i \ge k \}. 
\]
Note though that in \cite{N-90} these integers were denoted $r_k (M)$.  From the definitions, one has 
\[
\reg (M) = \reg_0 (M) \ge \reg_1 (M) \ge \cdots \ge \reg_{dim M} (M). 
\]
Moreover, if $t = \depth M$ and so $H^i_\fm (M) = 0$ for $i < t$ 
 then $\reg (M) = \reg_t (M)$. 
 
The Castelnuovo-Mumford regularity can also be defined using syzygies.  Let 
\[
\cdots \to F_1 \to F_0 \to M \to 0 
\]
be a graded minimal free resolution of $M$ over $S$ with $F_i = \oplus_j S(-a_{i, j})$. Then $\reg (M)$ is the maximum of the integers $a_{i, j} - i$ (see \cite{EG}). In particular, one has $e^+ (M) \le \reg (M)$. Furthermore, 
for any homogeneous ideal $I$ of $S$, this implies immediately $\reg (I) = 1 + \reg (S/I)$.

The following observation describes the mentioned comparison between the regularity index and the Castelnuovo-Mumford regularity.  It is stated explicitly 
in \cite{torino-school} (see also \cite[Theorem 4.2 and Corollary 4.8]{Eis05} for related results).

\begin{prop}[{\cite[Lemma 8]{torino-school}}] 
    \label{prop:compare reg and reg index}
If $M \ne 0$ is any finitely generated graded $S$-module then one has 
\[
\reg (M) - \dim M + 1 \le r(M) \le  \reg (M) - \depth M + 1. 
\]
\end{prop}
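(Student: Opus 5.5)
The plan is to compare the Hilbert function and the Hilbert polynomial through local cohomology, using the Grothendieck–Serre formula
\[
h_M(j) - p_M(j) = \sum_{i \ge 0} (-1)^i \dim_K [H^i_\fm (M)]_j \quad \text{for all } j \in \Z .
\]
This is valid for any finitely generated graded $S$-module. Set $d = \dim M$ and $t = \depth M$, so that $H^i_\fm(M) \neq 0$ exactly when $t \le i \le d$ and $i = t, d$ both occur, while $H^i_\fm(M) = 0$ for $i<t$ and $i>d$.

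\emph{Upper bound.} Write $m = \reg(M)$. Then $[H^i_\fm(M)]_j = 0$ whenever $j > m - i$. For $j \ge m - t + 1$ and any $i \ge t$ we get $j > m - i$, so every term in the sum vanishes. Hence $h_M(j) = p_M(j)$ for all $j \ge m - t + 1$, which gives $r(M) \le \reg(M) - \depth M + 1$.

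\emph{Lower bound.} Here I need a degree $j = \reg(M) - d$ with $h_M(j) \neq p_M(j)$. I would reduce by general hyperplane sections. After extending $K$ to an infinite field, which changes none of the invariants, I pass to $M' = M/H^0_\fm(M)$ and cut by a general linear form $\ell$ that is a nonzerodivisor on $M'$.

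The key steps, in order, are as follows.
\begin{itemize}
\item[(1)] In the case $d = 0$, we have $p_M = 0$ and $\reg M = e(M)$, so $r(M) = e(M) + 1$ and equality holds.
\item[(2)] For $d \ge 1$, consider the exact sequence $0 \to M'(-1) \to M' \to M'/\ell M' \to 0$. Standard facts give $\reg(M'/\ell M') \le \reg M'$ and $\reg M = \max\{\reg M', e(H^0_\fm(M))\}$. Also $r(M) \ge r(M')$ in a suitable sense, and the difference of Hilbert functions satisfies $\Delta (h_{M'} - p_{M'}) = h_{M'/\ell M'} - p_{M'/\ell M'}$.
\item[(3)] Proceed by induction on $d$.
\end{itemize}

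A more direct route is to use the top local cohomology. Let $a = e(H^d_\fm(M))$, so that $\reg(M) = \max_i \{i + e(H^i_\fm(M))\}$. I am not sure the top degree alone controls $r(M)$, since cancellations among the alternating terms are possible. The cleaner route is therefore the induction on $d$ via the exact sequence above.

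The main obstacle is step (2). It requires showing that the index where $h - p$ becomes zero drops by at most one when passing to a general hyperplane section, and tracking the contribution of $H^0_\fm(M)$, which affects $\reg$ but appears in $h_M - p_M$ only with a positive sign.

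Alternatively, since the statement is cited as \cite[Lemma 8]{torino-school}, one may simply invoke that reference. I would present the upper-bound argument in full and cite \cite{torino-school} and \cite[Theorem 4.2 and Corollary 4.8]{Eis05} for the lower bound.
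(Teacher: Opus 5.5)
The gap is in the left-hand inequality. You never prove it, and it cannot be proved: for non-Cohen--Macaulay modules it is false, exactly because of the cancellation you were worried about.

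Your proof of the right-hand inequality is correct and complete. For $j \ge \reg(M) - \depth M + 1$ and every $i \ge \depth M$ one has $j > \reg(M) - i$, so every term of the Grothendieck--Serre sum vanishes. You only need $H^i_\fm(M) = 0$ outside $\depth M \le i \le \dim M$; contrary to your phrasing, the intermediate modules may well be zero. Your step (1) for $\dim M = 0$ is also correct.

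Here is a counterexample to the lower bound. Take $S = K[x,y,z]$ and $M = S/I$ with $I = (xz, yz, z^2, x^3)$.
\begin{itemize}
\item The saturation is $I^{sat} = J = (z, x^3)$, so $H^0_\fm(M) = J/I \cong K(-1)$, spanned by the class of $z$.
\item $H^1_\fm(M) \cong H^1_\fm(K[x,y]/(x^3))$ has end $1$, with $\dim_K [H^1_\fm(M)]_1 = 1$.
\item Hence $\dim M = 1$ and $\reg(M) = 2$.
\item The Hilbert function is $h_M(0) = 1$ and $h_M(j) = 3$ for $j \ge 1$, and $p_M = 3$. So $r(M) = 1 < 2 = \reg(M) - \dim M + 1$.
\end{itemize}
In degree $1$ the contributions of $H^0_\fm(M)$ and $H^1_\fm(M)$ cancel in the Grothendieck--Serre sum. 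Conceptually: if $\dim M \ge 1$ and the Hilbert series is $Q(t)/(1-t)^{\dim M}$, then $r(M) = \deg Q - \dim M + 1$. So the lower bound is equivalent to $\deg Q \ge \reg(M)$, and here $Q = 1 + 2t$.

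The same example refutes the claim $r(M) \ge r(M')$ in your step (2). Indeed, $M' = M/H^0_\fm(M) = S/J$ is Cohen--Macaulay with $r(M') = 2$, while $r(M) = 1$.

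Positive depth does not rescue the inequality either. Over $K[x,y]$, the module $K[x,y] \oplus (K[x,y]/(y))(1)$ has dimension $2$, depth $1$ and regularity $0$, but regularity index $-2$.

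So neither your induction nor any citation can establish the left inequality as stated; the paper gives no argument of its own, only the reference. What your method does prove is:
\begin{itemize}
\item the upper bound, in general;
\item the equality $r(M) = \reg(M) + 1$ when $\dim M = 0$;
\item the equality $r(M) = \reg(M) - \dim M + 1$ when $M$ is Cohen--Macaulay. In that case $H^{\dim M}_\fm(M)$ is the only nonzero local cohomology module, so nothing cancels in degree $\reg(M) - \dim M$.
\end{itemize}
These are the only parts the paper uses later. It needs the case $\dim M = 0$ when comparing Macaulay constants, and $r(M) \le 1 + \reg(M)$ for modules of positive depth.
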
 

Now an extension of \Cref {rem:Macaulay constants compare}(i) follows. 

\begin{prop}
   \label{prop:comp Macaulay const} 
If $M$ is any $d$-dimensional, finitely generated graded $S$-module then one has 
\[
b_0 (M) \ge b_1 (M) \ge \cdots \ge b_d (M) > b_{d+1} (M) = e^+ (M). 
\] 
\end{prop}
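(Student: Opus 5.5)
The plan is to split according to whether $d = \dim M$ is positive, since the chain has a different shape in the two cases.

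\emph{Case $d \ge 1$.} Here there is little to do beyond collecting earlier results. By \Cref{lemdef:Macaulay constants}, $b_d (M) > b_{d+1} (M) = e^+ (M)$. By \Cref{lem:props q-exactness}, applied with $q = e^+ (M)$ to an $e^+(M)$-exact cone decomposition of $M$, one has $b_1 (M) \ge \cdots \ge b_{d+1} (M)$. The remaining inequality $b_0 (M) \ge b_1 (M)$ is \Cref{rem:Macaulay constants compare}(i), which says $e^+ (P) \ge e^+ (P^+)$. It also follows from \Cref{prop:reg index bound by Mac constants}(b), since $b_0 (M) = \max \{ r(M), b_1 (M)\}$. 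This settles the case $d \ge 1$.

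\emph{Case $d = 0$.} This is the only case with new content. The chain collapses to $b_0 (M) > b_1 (M) = e^+ (M)$. The equality $b_1 (M) = e^+ (M)$ holds by the definition in \Cref{lemdef:Macaulay constants} (see also \Cref{rem:Macaulay constants compare}(iii)). So it remains to show $b_0 (M) > e^+ (M)$.

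For this I will use \Cref{prop:reg index bound by Mac constants}(a), which gives $b_0 (M) = r(M)$ when $\dim M = 0$. In this case the Hilbert polynomial $p_M$ is zero. On the other hand, $M$ has a minimal generator of degree $e^+ (M)$, so $[M]_{e^+(M)} \neq 0$, that is,
\[
h_M (e^+ (M)) \ge 1 > 0 = p_M (e^+ (M)).
\]
By \Cref{def:reg index}, this forces $r(M) \ge e^+ (M) + 1$. Hence $b_0 (M) = r(M) > e^+ (M) = b_1 (M)$, which completes the proof.

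There is no real obstacle. The one point needing care is the degenerate case $d = 0$: there the strict inequality sits between $b_0$ and $b_1$ and is not covered by \Cref{lemdef:Macaulay constants}. The observation that resolves it is that the Hilbert function of $M$ is nonzero in degree $e^+(M)$ while its Hilbert polynomial vanishes.
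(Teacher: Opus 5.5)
Your proof is correct. For $d \ge 1$ it matches the paper, which simply cites \Cref{lemdef:Macaulay constants} and \Cref{rem:Macaulay constants compare}(i).

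For $d = 0$ you reach $b_0(M) > e^+(M)$ by a different and more elementary route. The paper combines $b_0(M) = r(M)$ with \Cref{prop:compare reg and reg index}. For a zero-dimensional module, dimension and depth are both $0$, so that result gives $r(M) = \reg(M) + 1$. The paper then invokes $e^+(M) \le \reg(M)$, which comes from the minimal free resolution. You instead observe directly that $h_M(e^+(M)) \ge 1$ while $p_M = 0$. The definition of the regularity index then forces $r(M) > e^+(M)$.

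Your argument uses nothing beyond definitions. You could even bypass $r(M)$: when $P^+ = 0$, the number of cones of degree $j$ in $P$ equals $h_M(j)$. So there is a cone of degree $e^+(M)$, giving $e^+(P) \ge e^+(M)$.

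The paper's detour buys extra information, namely the equality $b_0(M) = \reg(M) + 1$ for zero-dimensional $M$. This shows the bound of \Cref{thm:reg-0 modules} is attained in that case.
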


\begin{proof}
If $d \ge 1$ this is true by Proposition~\ref{lemdef:Macaulay constants} and \Cref {rem:Macaulay constants compare}(i). 

Assume $d=0$. Then, by definition, $b_1 (M) = e^+ (M)$. Combining \Cref{prop:reg index bound by Mac constants}(a) and \Cref{prop:compare reg and reg index}, we have $\reg (M) + 1 = r(M) = b_0 (M)$. Since one always has $e^+ (M) \le \reg (M)$, we conclude $b_0 (M) - 1 \ge b_1 (M)$, which completes the argument. 
\end{proof}


\section{Regularity Bounds} 
    \label{sec:vanishing}
    
In this section, we use Macaulay constants to establish vanishing results for the local cohomology modules of a finitely generated graded module.

We begin by recalling a statement from \cite{MN}.  

\begin{thm}[{\cite[Theorem 7.5]{MN}}]
    \label{thm:MN} 
Consider a saturated ideal    $I \ne S$ of $S$ and write the Hilbert polynomial of $S/I$ uniquely as 
\[
p_{S/I} (z)  = \sum_{j=0}^{d-1} \bigg[\binom{z +j-1}{j} -  \binom{z-c_j+j-1}{j} \bigg]
\] 
with integers $c_0 \ge c_1 \ge \cdots \ge c_{d-1} > 0$ where $d = \dim (S/I)$.  Then one has 
\[
\reg I \le c_0. 
\]

\end{thm}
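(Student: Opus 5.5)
The plan is to reduce the statement to Gotzmann's Regularity Theorem \cite{Go}. That theorem says: if $p_{S/I}$ is written in Gotzmann's form
\[
p_{S/I}(z) = \sum_{i=1}^{s} \binom{z + a_i - i + 1}{a_i}, \qquad a_1 \ge a_2 \ge \cdots \ge a_s \ge 0,
\]
then every saturated ideal $I$ with this Hilbert polynomial satisfies $\reg I \le s$. So the whole task is to read off a Gotzmann representation from the given expression for $p_{S/I}$ and to show that its number of terms $s$ is at most $c_0$.

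\emph{Step 1: expand the given expression.} Use the summation identity from the proof of \Cref{prop:Hilb pol with Macaulay const}, read backwards:
\[
\binom{z+j-1}{j} - \binom{z-c_j+j-1}{j} = \sum_{k=0}^{c_j-1}\binom{z-k+j-1}{j-1}.
\]
Then telescope across $j$ exactly as in that proof, as when passing between \eqref{eq:hilbPol with MacConstants-2} and \eqref{eq:hilbPol with MacConstants-1}. This rewrites $p_{S/I}$ as a sum of terms $\binom{z-k+j-1}{j-1}$, one for each pair $(j,k)$ with $c_{j+1} \le k \le c_j - 1$ (and $c_d := 0$). These are exactly the Hilbert polynomials of the positive-dimensional cones in a $0$-exact cone decomposition of $S/I$. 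Concretely, there is one cone of dimension $j$ in each degree $k$ with $c_j \le k < c_{j-1}$, after matching the index shift between the $c_j$ here and the Macaulay constants $b_j(S/I)$.

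\emph{Step 2: match with Gotzmann's form.} List the cones in increasing order of initial degree $k = 0,1,2,\ldots$. By $q$-exactness their dimensions are weakly decreasing, and there is exactly one positive-dimensional cone in each degree. Set $i = k+1$ and $a_i = (\text{dimension}) - 1$. Then $\binom{z-k+j-1}{j-1} = \binom{z + a_i - i + 1}{a_i}$, which is precisely Gotzmann's form, with $a_1 \ge a_2 \ge \cdots$. Hence $s$ equals the number of positive-dimensional cones. By \Cref{rem:Macaulay constants compare}(ii) this number is $b_1(S/I) - b_{d+1}(S/I) = b_1(S/I)$, which is the largest constant actually determined by $p_{S/I}$. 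It is bounded by $c_0$ because $c_0 \ge c_1$. By the uniqueness of the Gotzmann representation, this $s$ is the Gotzmann number, and Gotzmann's theorem gives $\reg I \le s \le c_0$.

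\emph{Main obstacle: the index bookkeeping.} The delicate point is reconciling the indexing in the statement, where $j$ runs from $0$ to $d-1$ and the $j=0$ summand vanishes identically, with the Macaulay-constant indexing of \Cref{prop:Hilb pol with Macaulay const}. One must check that the number of Gotzmann terms is exactly the top constant that $p_{S/I}$ determines, and that it does not exceed $c_0$.

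\emph{Fallback: direct induction.} If citing Gotzmann is not acceptable, I would prove the bound directly by induction on $d$. After extending $K$, take a general linear form $\ell$ that is a nonzerodivisor on $S/I$, which is possible because $I$ is saturated. Then $p_{S/(I+\ell)} = \Delta p_{S/I}$, and this has constants $c_1,\ldots,c_{d-1}$ in the same form. Apply the induction hypothesis to the saturation of $(I+\ell)/\ell$ in $S/\ell S$. Then lift regularity back along $0 \to (S/I)(-1) \to S/I \to S/(I+\ell) \to 0$, using Gotzmann persistence, or the Macaulay bound against the lex ideal $L_p$, to control the degrees where $I+\ell$ differs from its saturation. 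Controlling that discrepancy is the genuinely hard step in this route.
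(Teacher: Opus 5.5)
Your main argument is correct, but it is not the proof the paper relies on. The paper imports \Cref{thm:MN} from \cite{MN}, where it is proved combinatorially, by bounding the number of expansions needed to reach a saturated strongly stable ideal with the given Hilbert polynomial. Gotzmann's theorem enters only afterwards, when \Cref{lem:compare hilb} shows that the Gotzmann number equals $c_0$, so the two statements are equivalent. Your proof is exactly this equivalence, run in the direction Gotzmann $\Rightarrow$ \Cref{thm:MN}.

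What your route buys is brevity. Once \Cref{thm:Gotzmann reg} is granted, everything reduces to a formal rewriting of $p_{S/I}$. You do not even need cones, or the lex ideal used in \Cref{lem:compare hilb}, since $p_{S/I}$ is already realized by $S/I$. What it costs is independence. The argument of \cite{MN} does not use Gotzmann's persistence machinery. With your proof, the observation in \Cref{rem:compare with Gotzmann} becomes circular: that remark derives Gotzmann's theorem from \Cref{thm:reg-1 modules}, which rests on \Cref{thm:reg1 bound for algebras} and hence on \Cref{thm:MN}. Your fallback induction is essentially the standard proof of Gotzmann's theorem, and the step you leave open there is precisely where persistence is needed.

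You should settle the index bookkeeping rather than hedge. As printed, the $j=0$ summand vanishes and the right-hand side has degree at most $d-2$, whereas $\deg p_{S/I}=d-1$. So the intended summands are $\binom{z+j}{j+1}-\binom{z-c_j+j}{j+1}$. This is how the paper uses the statement: $c_i=b_{i+1}(S/I)$ in the proofs of \Cref{thm:reg1 bound for algebras} and \Cref{lem:compare hilb}. Hence $s=b_1(S/I)=c_0$ exactly, as your own sentence ``dimension $j$ in degrees $c_j\le k<c_{j-1}$'' already says.

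Your closing justification ``$b_1(S/I)\le c_0$ because $c_0\ge c_1$'' tacitly sets $b_1=c_1$, which is false in general. For the twisted cubic, $p(z)=3z+1$ gives $c_0=b_1=4$, $c_1=b_2=3$, and Gotzmann number $s=4$. The conclusion $\reg I\le c_0$ survives, but because $s=c_0$, not because $c_0\ge c_1$.

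Your displayed termwise identity is also off by one. The correct identity is $\sum_{k=0}^{c-1}\binom{z-k+j-1}{j-1}=\binom{z+j}{j}-\binom{z-c+j}{j}$. Binomials of the form $\binom{z+j-1}{j}$ appear only after the telescoping in the proof of \Cref{prop:Hilb pol with Macaulay const}.
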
 

This statement is shown by a combinatorial argument. The key is an analysis of the number of expansions needed to produce any saturated strongly stable monomial ideal of $S$ with a given Hilbert polynomial by starting from a suitable identity ideal (see \cite[Theorem 4.4]{MN}). 

Combined with \Cref{prop:Hilb pol with Macaulay const}, the last result can be restated using a Macaulay constant. 

\begin{thm}
    \label{thm:reg1 bound for algebras} 
For any standard graded $K$-algebra $A$ of positive dimension, one has $\reg_1 (A) < b_1 (A)$. 
\end{thm}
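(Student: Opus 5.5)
Write $A = S/I$ with $I \ne S$ a homogeneous ideal, so $e^+(A) = 0$, and put $d = \dim A \ge 1$. The plan is to pass from $I$ to its saturation $I^{\mathrm{sat}}$ and apply \Cref{thm:MN} there.

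First, set $A' = S/I^{\mathrm{sat}}$. Then $N = I^{\mathrm{sat}}/I = H^0_\fm(A)$ is a zero-dimensional submodule of $A$ with $A/N = A'$. Since $d \ge 1$, we have $I^{\mathrm{sat}} \ne S$, so $A'$ is still cyclic and generated in degree $0$, i.e.\ $e^+(A') = 0 = e^+(A)$. By \Cref{lem:Macaulay cont for direct sum}(a), $b_i(A') = b_i(A)$ for $i = 1,\ldots,d+1$.

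On the cohomology side, $H^i_\fm(A) \cong H^i_\fm(A')$ for all $i \ge 1$. This follows from the long exact sequence of $0 \to N \to A \to A' \to 0$, using $H^i_\fm(N) = 0$ for $i \ge 1$. Hence $\reg_1(A) = \reg_1(A')$. Moreover, $H^0_\fm(A') = 0$, so $\reg(A') = \reg_1(A')$. Altogether the task reduces to showing $\reg(A') \le b_1(A') - 1$ for the saturated ideal $J = I^{\mathrm{sat}}$.

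Next, I match the Hilbert polynomial. By \Cref{prop:Hilb pol with Macaulay const} with $e^+(A') = 0$,
\[
p_{A'}(z) = \sum_{j=1}^d \bigg[\binom{z+j-1}{j} - \binom{z-b_j+j-1}{j}\bigg], \qquad b_1 \ge \cdots \ge b_d > 0 .
\]
\Cref{thm:MN} writes $p_{S/J}$ in the same shape but with summation index $j$ from $0$ to $d-1$ and constants $c_0 \ge \cdots \ge c_{d-1} > 0$. Its $j=0$ summand is $\binom{z-1}{0} - \binom{z-c_0-1}{0} = 0$ as a polynomial, so only $j = 1,\ldots,d-1$ contribute. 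This is the main obstacle: the indexing in \cite{MN} must be reconciled with \eqref{eq:hilbPol with MacConstants-2}.

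The intended reading is that the dimension in \cite{MN} is the projective dimension $d-1$ of $\Proj(S/J)$, and that the $c$'s there are $c_{j-1} = b_j$, i.e.\ the summand indexed by $j-1$ in \cite{MN} corresponds to the summand $\binom{z+j-1}{j} - \binom{z - b_j + j-1}{j}$ here. In particular $c_0 = b_1(A')$. I would verify this by using the uniqueness of such representations, which is the final assertion of \Cref{prop:Hilb pol with Macaulay const} for the polynomials in our normalization. Concretely, I would match summands of degree $j-1$ from the top degree downward, exactly as in the uniqueness argument there.

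Finally, \Cref{thm:MN} gives $\reg J \le c_0 = b_1(A)$. Since $\reg J = 1 + \reg(S/J)$, we get $\reg_1(A) = \reg(A') = \reg J - 1 \le b_1(A) - 1$, that is, $\reg_1(A) < b_1(A)$.
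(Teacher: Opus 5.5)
Your proposal is correct and is essentially the paper's proof: you pass to $S/I^{sat}$, compare the representation in \Cref{thm:MN} with \Cref{prop:Hilb pol with Macaulay const} to get $c_0 = b_1(A)$, and conclude from $\reg_1(A) = \reg(S/I^{sat}) = \reg I^{sat} - 1$; the paper states this identification as $b_{i+1}(A) = c_i$, which is exactly your corrected indexing. The only difference is cosmetic: the paper matches $p_A = p_{S/I^{sat}}$ directly against the Macaulay constants of $A$, whereas you compute the constants for $S/I^{sat}$ and transfer them back via \Cref{lem:Macaulay cont for direct sum}(a).
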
 

\begin{proof}
Write $A = S/I$ with a homogeneous ideal $I$ of a polynomial ring $S$. Denote by $I^{sat}$ the saturation of $I$. Then $A$ and $S/I^{sat}$ have the same Hilbert polynomial, and $A$ is generated in degree zero.  Thus, using the notation of \Cref{thm:MN}, a comparison with \Cref{prop:Hilb pol with Macaulay const}, where $e^+ (A) = 0$, gives $b_{i+1} (A) = c_i$ for $i = 0,1,\ldots,\dim M - 1$ and so $\reg I^{sat} \le b_1 (A)$. Observe that we used here that $b_1 (A) \ge \cdots \ge b_{\dim A +1} (A) > e^+ (A) = 0$, which is true by Proposition~\ref{lemdef:Macaulay constants}. 
Since $\reg_1 (A) = \reg (S/I^{sat}) = \reg (I^{sat}) - 1$, the claim follows. 
\end{proof}

Before generalizing this result to modules let us observe that \Cref{thm:MN} (and so \Cref{thm:reg1 bound for algebras}) is in fact equivalent to a well-known result. This realization was one of the starting points of this paper. 
Recall the following  result of Gotzmann \cite{Go} (see, for example, \cite[Theorem 4.3.2]{BH-book} for a proof in English). 

\begin{thm}[Gotzmann's Regularity Theorem] 
    \label{thm:Gotzmann reg}
Write the Hilbert polynomial of $A = S/I$ in the unique form 
\[
p_A (z) = \binom{z+a_1}{a_1}  + \binom{z+a_2 -1}{a_2} + \cdots + \binom{z+a_s -(s-1)}{a_s}
\]
with integers $a_1 \ge a_2 \ge \cdots \ge a_s \ge 0$. Then the saturation $I^{sat}$ of $I$ satisfies $\reg I^{sat} \le s$. 
\end{thm}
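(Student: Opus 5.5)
The plan is to deduce Gotzmann's bound from \Cref{thm:reg1 bound for algebras}. The idea is to show that the number $s$ of summands in Gotzmann's representation of $p_A$ equals the Macaulay constant $b_1(A)$. Once $s = b_1(A)$ is known, the argument from the proof of \Cref{thm:reg1 bound for algebras} finishes it: $\reg I^{sat} = 1 + \reg(S/I^{sat}) = 1 + \reg_1(A) \le b_1(A) = s$.

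First I dispose of the degenerate case $\dim A = 0$. Then $p_A = 0$, so the representation is the empty sum with $s=0$, and $I^{sat} = S$. Depending on the convention one adopts for the regularity of $S$ (or of the zero ideal when $I^{sat}=S$ is excluded), the bound $\reg I^{sat}\le 0$ holds trivially, so I will note this convention explicitly.

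Now assume $d = \dim A \ge 1$ and write $b_j = b_j(A)$, so that $e^+(A) = 0 = b_{d+1}$. By Equation \eqref{eq:hilbPol with MacConstants-1} of \Cref{prop:Hilb pol with Macaulay const},
\[
p_A(z) = \sum_{j=1}^{d} \sum_{k=b_{j+1}}^{b_j-1} \binom{z-k+j-1}{j-1}.
\]
I will list these summands by increasing $k$. Since $0 = b_{d+1} < b_d \le \cdots \le b_1$, the index $k$ runs exactly once through $0,1,\ldots,b_1-1$. The value of $k$ determines $j$ as the unique index with $b_{j+1} \le k < b_j$, and this $j$ is weakly decreasing as $k$ increases.

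Next I reindex by $m = k+1 \in \{1,\ldots,b_1\}$ and put $a_m = j-1$ for the $j$ attached to $k=m-1$. The $m$-th summand then becomes
\[
\binom{z - (m-1) + a_m}{a_m},
\]
which is exactly the $m$-th term of Gotzmann's form. Moreover $a_1 \ge a_2 \ge \cdots \ge a_{b_1} \ge 0$, since $j$ is weakly decreasing and $j \ge 1$. By the uniqueness of Gotzmann's representation, this identifies $s = b_1(A)$.

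The only real work is bookkeeping: checking that the blocks $[b_{j+1}, b_j-1]$ tile $[0, b_1-1]$ in the correct order, so that the shifts $-(m-1)$ match the $-k$ in \eqref{eq:hilbPol with MacConstants-1}. I do not expect a genuine obstacle here. If one prefers not to invoke uniqueness of Gotzmann's form, the same computation run backwards shows that any Gotzmann representation yields integers $b_j$ satisfying (a) of \Cref{thm:char hilb pol}. Then the last assertion of \Cref{prop:Hilb pol with Macaulay const} forces these to be the Macaulay constants, which gives $s=b_1(A)$ again.
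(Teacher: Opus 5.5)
Your proposal is correct and follows the paper's own route: \Cref{rem:compare with Gotzmann} derives Gotzmann's bound from $\reg_1(S/I) < b_1(S/I)$ (\Cref{thm:reg-1 modules}, which for $S/I$ amounts to \Cref{thm:reg1 bound for algebras}) together with the identification $s = b_1(S/I)$ from \Cref{lem:compare hilb}, and your reindexing of \eqref{eq:hilbPol with MacConstants-1} is exactly the bookkeeping behind that lemma (the paper phrases it as counting the $b_j - b_{j+1}$ cones of dimension $j$). In the case $\dim A = 0$ no convention is needed: $I^{sat} = S$ is free on one generator of degree $0$, so $\reg I^{sat} = 0 = s$.
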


We need the following observation. 

\begin{lemma}
   \label{lem:compare hilb}
If a nonzero polynomial $p \in \Q[z]$ can be written as 
\begin{align*}
p (z) & = \binom{z+a_1}{a_1}  + \binom{z+a_2 -1}{a_2} + \cdots + \binom{z+a_s -(s-1)}{a_s} \\
& = \sum_{j=0}^{d-1} \bigg[\binom{z +j-1}{j} -  \binom{z-c_j+j-1}{j} \bigg] 
\end{align*}
with integers $a_1 \ge a_2 \ge \cdots \ge a_s \ge 0$ and $c_0 \ge c_1 \ge \cdots \ge c_{d-1} > 0$ for some $d \ge 1$,  then $s = c_0$. 
\end{lemma}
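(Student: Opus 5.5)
The plan is to rewrite the second expression for $p$ as a Gotzmann-type sum with exactly $c_0$ summands, and then use the uniqueness of the Gotzmann representation to conclude $s = c_0$. For indexing, I read the second form of $p$ in the normalization of \Cref{prop:Hilb pol with Macaulay const} with $e^+ = 0$. That is, I set $b_{i+1} = c_i$ for $0 \le i \le d-1$ and $b_{d+1} = 0$, so that
\[
p(z) = \sum_{j=1}^d \bigg[\binom{z+j-1}{j} - \binom{z-b_j+j-1}{j}\bigg].
\]
This is exactly the comparison used in the proof of \Cref{thm:reg1 bound for algebras}. Note that $b_1 \ge \cdots \ge b_d > b_{d+1} = 0$.

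\textbf{Step 1: expand the inner sums.} The computation in the proof of \Cref{prop:Hilb pol with Macaulay const}, read backwards, turns each bracket into a sum of binomials. Combining the brackets, one gets
\[
p(z) = \sum_{j=1}^d \sum_{k=b_{j+1}}^{b_j-1} \binom{z-k+j-1}{j-1}.
\]
Since $b_{d+1}=0$ and the intervals $[b_{j+1}, b_j - 1]$ are disjoint and consecutive, the index $k$ runs exactly once over $0,1,\ldots,b_1-1$. Thus there are precisely $b_1 = c_0$ summands.

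\textbf{Step 2: match the Gotzmann form.} For each $k \in \{0,\ldots,b_1-1\}$, let $j(k)$ be the unique $j$ with $b_{j+1} \le k < b_j$, and put $a'_{k+1} = j(k)-1 \ge 0$. The summand indexed by $k$ is then
\[
\binom{z + a'_{k+1} - k}{a'_{k+1}},
\]
which is exactly the $m$-th Gotzmann term $\binom{z+a_m-(m-1)}{a_m}$ with $m = k+1$. As $k$ increases, $j(k)$ weakly decreases because the $b_j$ are nonincreasing in $j$. Hence $a'_1 \ge a'_2 \ge \cdots \ge a'_{c_0} \ge 0$, and this is a Gotzmann representation of $p$ with $c_0$ terms.

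\textbf{Step 3: uniqueness.} It remains to check that the Gotzmann representation is unique, in particular its length. This is the step where some care is needed, although it is standard. I would argue by induction on $s$. The degree of $p$ determines $a_1$. The leading coefficient of $p$ times $a_1!$ counts the number $r$ of indices with $a_m = a_1$. Subtracting these first $r$ terms leaves a polynomial of lower degree. After the substitution $z \mapsto z - r$, that remainder is again in Gotzmann form, with terms $\binom{z+a_{r+i}-(i-1)}{a_{r+i}}$. Induction then shows that the whole sequence $(a_m)$, and hence $s$, is determined by $p$. The base case is $p = 0$, which forces $s = 0$. Comparing with Step 2 gives $s = c_0$.
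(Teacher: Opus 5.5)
Your argument is correct and is essentially the paper's: with $b_{i+1}=c_i$ and $b_{d+1}=0$, you identify the second expression with the double sum \eqref{eq:hilbPol with MacConstants-1}, read it off block by block as a Gotzmann representation with exactly $b_1=c_0$ terms, and conclude by uniqueness of that representation. The differences are minor. You use the identity between \eqref{eq:hilbPol with MacConstants-1} and \eqref{eq:hilbPol with MacConstants-2} as a purely combinatorial fact, whereas the paper realizes $p$ by a lex ideal and counts cones via \Cref{rem:Macaulay constants compare}(ii). You also prove the uniqueness that the paper's ``this forces'' uses tacitly; note that the shift normalizing the remainder there should be $z\mapsto z+r$, a harmless slip.
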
 

\begin{proof}
It is well-known that, for any integers $d \ge 1$ and $c_0 \ge c_1 \ge \cdots \ge c_{d-1} > 0$, there is a homogeneous ideal $I \subset S$ such that $A = S/I$ has Hilbert polynomial 
\[
p_A (z) = p(z) =  \sum_{j=0}^{d-1} \bigg[\binom{z +j-1}{j} -  \binom{z-c_j+j-1}{j} \bigg]. 
\]
For example, one may choose for $I$ the lexicographic ideal $L_p$ to $p$ (see \cite[Theorem 2.3]{MN}). 
Comparing with \Cref{prop:Hilb pol with Macaulay const}, it follows for the Macaulay constants $b_i = b_i (A)$ of $A$ that $b_{i+1} = c_i$ for $i = 0,1,\ldots,d-1$. Since $e^+ (A) = 0$, 
writing $p_A$ in the form of Equation \eqref{eq:hilbPol with MacConstants-1}, we get 
\[
\binom{z+a_1}{a_1}  + \binom{z+a_2 -1}{a_2} + \cdots + \binom{z+a_s -(s-1)}{a_s}  = \sum_{j=1}^d \bigg[\binom{z +j-1}{j} -  \binom{z-b_j+j-1}{j} \bigg].
\]
By \Cref{rem:Macaulay constants compare}(ii), any 0-exact cone decomposition of $A$ has exactly $b_j - b_{j+1}$ cones of dimension $j$ for  $j = 1,2,\ldots,d$, where $b_{d+1} = e^+ (A) = 0$. This forces 
$a_1 = a_2 = \cdots = a_{b_d} = d-1$, \
$a_{b_d + 1} = \cdots = a_{b_{d-1} } = d-2$, \ ..., $a_{b_2 + 1} = \cdots = a_{b_{1} } = 0$ and $s = b_1 = c_0$. 
\end{proof} 

\Cref{lem:compare hilb} immediately shows that Gotzmann's Regularity Theorem is equivalent to \Cref{thm:MN}. In the form of \Cref{thm:reg1 bound for algebras}, these results can be generalized as follows. 

\begin{thm}
    \label{thm:reg-1 modules}
Any finitely generated graded $S$-module $M$ of positive dimension satisfies $\reg_1 (M) < b_1 (M)$. 
\end{thm}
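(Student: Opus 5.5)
The plan is to reduce to the case of algebras, \Cref{thm:reg1 bound for algebras}, in three moves: a Gr\"obner degeneration, a splitting into cyclic summands, and a comparison of Macaulay constants via \Cref{lem:Macaulay cont for direct sum}.

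\emph{Step 1: degenerate.} Write $M \cong F/Q$ with a minimal epimorphism $F = F_M \to M$, and fix a monomial order. Put $M' = F/\ini_<(Q)$. Then $M'$ and $M$ have the same Hilbert function. Moreover $e^+(M') = e^+(M)$: the map $F \to M'$ is still minimal, because $Q \subseteq \fm F$ forces $\ini_<(Q) \subseteq \fm F$. Since Macaulay constants depend only on $e^+$ and the Hilbert function (\Cref{lemdef:Macaulay constants}), we get $b_i(M') = b_i(M)$ for all $i$, and also $\dim M' = \dim M$.

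\emph{Step 2: semicontinuity.} By upper semicontinuity of the dimensions $\dim_K [H^i_\fm(\cdot)]_j$ along the flat Gr\"obner family degenerating $F/Q$ to $F/\ini_<(Q)$ (the standard Sbarra / Bayer--Stillman type result), we have $[H^i_\fm(M)]_j = 0$ whenever $[H^i_\fm(M')]_j = 0$. Hence $\reg_1(M) \le \reg_1(M')$, and it suffices to prove $\reg_1(M') < b_1(M')$. I take this semicontinuity as a known input rather than reprove it.

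\emph{Step 3: split into cyclic pieces.} Write $\ini_<(Q) = \bigoplus_k I_k e_k$ with monomial ideals $I_k$ and $\deg e_k = a_k$. Then
\[
M' \cong \bigoplus_k N_k, \qquad N_k = (S/I_k)(-a_k),
\]
and $e = e^+(M') = \max_k a_k$. Local cohomology commutes with direct sums, so $\reg_1(M') = \max_k \reg_1(N_k)$. Summands with $\dim N_k = 0$ contribute $-\infty$, since $H^i_\fm(N_k) = 0$ for $i \ge 1$. For each $k$ with $\dim N_k \ge 1$, \Cref{thm:reg1 bound for algebras} together with \Cref{lem:constants and shifting} gives $\reg_1(N_k) < b_1(N_k)$. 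It remains to show $b_1(N_k) \le b_1(M')$ for each such $k$.

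\emph{Step 4: compare constants.} If $a_k = e$, write $M' = N_k \oplus R$ with $e^+(R) \le e$. \Cref{lem:Macaulay cont for direct sum}(b) or (c), applied with $N_k$ in the role of $M$, yields $b_1(N_k) \le b_1(M')$.

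If $a_k < e$, replace $N_k$ by its truncation $(N_k)_{\ge e}$. The cokernel of $(N_k)_{\ge e} \hookrightarrow N_k$ has finite length, so $H^i_\fm$ for $i \ge 2$ is unchanged. For $i = 1$ the long exact sequence shows $H^1_\fm$ can change only in degrees below $e$, and those degrees do not affect the inequality we need, because $\reg_1(N_k) < b_1(N_k)$ and $b_1(N_k) > e$. Similarly, replacing $M'$ by the truncation $M'_{\ge e}$ leaves $b_i(M')$ unchanged for $i \ge 1$: the Hilbert polynomial and $e^+$ are the same, as used in the proof of \Cref{lem:Macaulay cont for direct sum}(c). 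After these replacements every summand has $e^+ = e$, and the first case applies, provided we know
\[
b_1(N_k) \le b_1\big((N_k)_{\ge e}\big).
\]

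\emph{Main obstacle.} The crux is exactly this inequality: truncation does not decrease $b_1$. I would prove it by converting an $a_k$-exact decomposition of $N_k$ into an $e$-exact decomposition of $(N_k)_{\ge e}$ using \Cref{alg:transform q-stand to q-exact}. First, repeatedly fan-decompose the positive-dimensional cones of initial degree below $e$; this produces an $e$-standard decomposition. Then run the algorithm to make it $e$-exact. Each fan step replaces a cone by cones of dimensions $1, \ldots, \dim C$ with initial degree one larger. So, as in the proof of \Cref{lem:Macaulay cont for direct sum}(c), the maximal initial degree of the $1$-dimensional cones never decreases. By \Cref{rem:Macaulay constants compare}(ii), that maximal degree plus one is $b_1$, which gives the desired inequality.
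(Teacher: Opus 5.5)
Your proof is correct in substance once one slip is fixed (second paragraph). It uses the same ingredients as the paper: Gr\"obner degeneration with semicontinuity, splitting into cyclic pieces, \Cref{thm:reg1 bound for algebras}, and \Cref{lem:Macaulay cont for direct sum}. The difference is that you perform the steps in the opposite order.

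The paper first replaces $M$ by $M_{\ge e}$ with $e=e^+(M)$. The free module is then $S^r(-e)$, so every cyclic piece $(S/I_j)(-e)$ of the degeneration has $e^+=e$. Parts (c) and (b) of \Cref{lem:Macaulay cont for direct sum} then bound $b_1$ of each piece by $b_1(M)$ at once. You degenerate first, so your pieces $N_k=(S/I_k)(-a_k)$ have varying $a_k$. Part (c) does not apply when $a_k<e$, because it requires the complementary summand to have $e^+$ at most $a_k$. That is why you need the extra fact $b_1(N)\le b_1(N_{\ge e})$ for $e\ge e^+(N)$.

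The trade-off is this. The paper avoids your lemma but must compare $\reg_1(M)$ with $\reg_1(M_{\ge e})$. It asserts they agree, which is false in general: for $M=S\oplus K(-5)$ over $S=K[x,y]$ one has $\reg_1(M)=0$ but $\reg_1(M_{\ge 5})=5$. The needed inequality $\reg_1(M)\le \reg_1(M_{\ge e})$ does hold, since $H^1_{\fm}(M)$ is a quotient of $H^1_{\fm}(M_{\ge e})$. Your route only truncates on the Macaulay-constant side. Combined with parts (b) and (c), your lemma also gives $b_1(N)\le b_1(N\oplus R)$ for arbitrary $R$ whenever $\dim N\ge 1$.

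Two corrections are needed.

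First, in the monotonicity argument, track the largest initial degree of the \emph{positive-dimensional} cones, not the $1$-dimensional ones. An exact decomposition need not contain any $1$-dimensional cone: the $0$-exact decomposition $K[x,y]$ of $S=K[x,y]$ has $b_1=b_2=1$. So your invariant is undefined, for instance, when $I_k=0$. The identity you want is $b_1-1=e^+(P^+)$ from \Cref{rem:Macaulay constants compare}(iv), not part (ii). A fan step turns a positive-dimensional cone of initial degree $c$ into a $0$-dimensional cone of degree $c$ plus positive-dimensional cones of degree $c+1$. Discarding the $0$-dimensional cones below degree $e$ does not touch $P^+$. Hence this top degree never drops, and the lemma follows. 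The intermediate decomposition is indeed $e$-standard, because the $a_k$-exact decomposition has exactly one positive-dimensional cone in each degree $a_k,\dots,b_1(N_k)-1$, with non-increasing dimensions.

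Second, delete the sentence about how $H^1_{\fm}$ changes when truncating $N_k$. It plays no role, since you bound $\reg_1(N_k)$ directly. Its justification $b_1(N_k)>e$ is also false in general: for $M=S/(x)\oplus S(-2)$ over $K[x,y]$ you get $N_1=S/(x)$ with $b_1(N_1)=1<2=e$.
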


\begin{proof} Passing from $M$ to $M_{\ge e^+ (M)}$ does not change $\reg_1$ nor $b_1$ (see \Cref{lem:Macaulay cont for direct sum}(a)). Thus, we may assume that $a = a(M) = e^+ (M)$. 

Write $M = F/Q$, where $F = S^r (-a) = \oplus_{j=1}^r S e_j$, where $r = h_M (a)$ and the basis elements $e_j$ of $F$ have degree $a$. 
Semicontinuity gives $\reg_1 (Q) \le \reg_1 (\ini_< (Q))$ for any initial module of $Q$ with respect to a monomial order $<$ on $F$. Note that $\ini_< (Q) \cong \oplus_{j=1}^r I_j e_j$ for some  monomial ideals $I_j$ of $S$. 
Since taking a direct sum commutes with local cohomology, it follows that 
\begin{align*}
\reg_1 (M) &   \le \reg_1 (F/\ini_< (Q)) = \reg_1 (\oplus_{j=1}^r (S/I_j) e_j) \\
& \le \max \{ \reg_1 ((S/I_j) e_j) \; \mid \; j \in [r] \} \\
& = \max \{ \reg_1 ((S/I_j) e_j) \; \mid \; j \in [r] \} \text{ and } \dim S/I_j \ge 1\}  \\
& < \max \{ b_1 ((S/I_j )e_j) \; \mid \; j \in [r] \text{ and } \dim S/I_j \ge 1 \}, 
\end{align*}
where the third line is true because $\reg_1 (N)$ of any zero-dimensional module $N$ is equal to $-\infty$ by definition. 
Using first Part (c) of \Cref{lem:Macaulay cont for direct sum} and then its Part (b), we get 
\begin{align*}
\max \{ b_1 ((S/I_j) e_j) \; \mid \; j \in [r] \text{ and } \dim S/I_j \ge 1 \} & \le b_1  (\bigoplus_{j \in [r] \text{ and } \dim S/I_j \ge 1 }   (S/I_j) e_j ) 
\\[2pt]
& =  b_1  (\bigoplus_{j \in [r]}   (S/I_j) e_j )\\[2pt]
& = b_1 (F/\ini_< (Q)) \\
& = b_1 (F/Q) = b_1 (M). 
\end{align*}
Combining the previous two estimates, the claim follows. 
\end{proof} 

Observe that the largest submodule of $M$ with finite length is precisely $H^0_\fm (M)$. Following \cite{MNP}, we call $M^{sm} = M/H^0 (M)$ the  \emph{slight modification} of $M$. If $M = S/I$ then $M^{sm} \cong S/J$, where $J \subset S$ is the saturation of the ideal $I$. More generally, if $M = F/Q$ for some free $S$-module $F$ then 
\[
Q^{sat} = \bigcup_k (Q :_F \fm^k)
\]
is the \emph{saturation} of $Q$ and $M^{sm} \cong F/Q^{sat}$. 

Note that $\reg_k (M) = \reg_k (M^{sm})$ whenever $k \ge 1$ (see also \Cref{lem:modulo finite length} below). 
Moreover,  $H^0_\fm (M^{sm}) = 0$ gives 
\begin{align}
    \label{eq:reg of sm}
\reg (M^{sm}) = \reg_1 (M^{sm}) = \reg_1 (M). 
\end{align}

Using these observations, we make the connection between \Cref{thm:reg-1 modules} and \Cref{thm:Gotzmann reg} explicit. 

\begin{remark} 
   \label{rem:compare with Gotzmann} 
\Cref{thm:reg-1 modules} immediately implies Gotzmann's Regularity Theorem, \Cref{thm:Gotzmann reg}.  
Indeed, if $M = S/I$ has positive Krull dimension then $I^{sat} \neq S$ and \Cref{thm:reg-1 modules} yields 
\[
\reg I^{sat} = 1 + \reg (S/I^{sat}) = 1+  \reg_1 (S/I) \le b_1 (S/I), 
\]
which is exactly the estimate given in \Cref{thm:Gotzmann reg} because the number $s$ in that statement is equal to $b_1 (S/I)$ by \Cref{lem:compare hilb} as $e^+ (S/I) = 0$. 

Note that $b_1 (S/I^{sat}) = b_1 (S/I)$ by \Cref{lem:Macaulay cont for direct sum}(a) as $e^+ (S/I^{sat}) = e^+ (S/I) = 0$. 
\end{remark}

In order to bound the Castelnuovo-Mumford regularity of an arbitrary finitely generated graded module $M$ we need some further preparations. 

\begin{lemma}
   \label{lem:modulo finite length} 
If $L$  is any finite length submodule of $M$ then $\reg_k (M) = \reg_k (M/L)$ for any integer $k \ge 1$. 
\end{lemma}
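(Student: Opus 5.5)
The plan is to compare the local cohomology of $M$ and $M/L$ through the short exact sequence of graded $S$-modules
\[
0 \to L \to M \to M/L \to 0 .
\]
Since $L$ has finite length, it is $\fm$-torsion. Hence $H^0_\fm(L) = L$ and $H^i_\fm(L) = 0$ for all $i \ge 1$. This is the only input about $L$ that is needed. One can see it by noting that $L$ is annihilated by a power of $\fm$, so the Čech complex computing $H^\bullet_\fm(L)$ collapses. Alternatively, $\Gamma_\fm$ is exact on $\fm$-torsion modules, and a finite length module admits an injective resolution by $\fm$-torsion injectives (sums of graded shifts of $E(K)$).

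Next, I take the long exact sequence of graded local cohomology modules, whose maps are homogeneous of degree zero:
\[
\cdots \to H^i_\fm(L) \to H^i_\fm(M) \to H^i_\fm(M/L) \to H^{i+1}_\fm(L) \to \cdots .
\]
For $i \ge 1$, both flanking terms $H^i_\fm(L)$ and $H^{i+1}_\fm(L)$ vanish. This gives graded isomorphisms $H^i_\fm(M) \cong H^i_\fm(M/L)$ for every $i \ge 1$. In low degree the sequence reads
\[
0 \to L \to H^0_\fm(M) \to H^0_\fm(M/L) \to H^1_\fm(L) = 0 ,
\]
which controls $i=0$ but is not needed here.

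It follows that $e(H^i_\fm(M)) = e(H^i_\fm(M/L))$ for all $i \ge 1$. Then, for any $k \ge 1$, the defining maxima
\[
\reg_k(M) = \max\{ i + e(H^i_\fm(M)) \mid i \ge k\}
\quad\text{and}\quad
\reg_k(M/L) = \max\{ i + e(H^i_\fm(M/L)) \mid i \ge k\}
\]
run over identical sets of values, so $\reg_k(M) = \reg_k(M/L)$. This includes the case where both equal $-\infty$.

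The only step requiring any care is the vanishing $H^i_\fm(L) = 0$ for $i \ge 1$ for a finite length module. It is standard, and I would cite it or justify it in one line via the Čech complex: each localization $L_{x_j}$ vanishes because every $x_j$ acts nilpotently on $L$. The rest is formal.
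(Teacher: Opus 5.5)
Your proposal is correct and follows the paper's proof: the short exact sequence $0 \to L \to M \to M/L \to 0$, the vanishing $H^i_\fm(L) = 0$ for $i \ge 1$, and the resulting degree-preserving isomorphisms $H^i_\fm(M) \cong H^i_\fm(M/L)$ for $i \ge 1$. Your \v{C}ech-complex justification of the vanishing does the same job as the paper's appeal to $\dim L = 0$.
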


\begin{proof}
This follows from the long exact cohomology sequence induced by the short exact sequence 
\[
0 \to L \to M \to M/L \to 0
\]
and using that $H^i_\fm (L) = 0$ if $i \ge 1$ as the Krull dimension of $L$ is zero by assumption. 
\end{proof}

The following observation will be useful. 

\begin{prop}
   \label{prop:compare reg}
For any finitely generated graded $S$-module $M$, one has 
\[
\reg (M) = \max \{r (M) - 1, \reg (M^{sm})\}. 
\]
\end{prop}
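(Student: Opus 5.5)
We need to prove $\reg(M) = \max\{r(M)-1, \reg(M^{sm})\}$. The plan is to compare the Hilbert function of $M$ with its Hilbert polynomial, using the Grothendieck--Serre formula
\[
h_M(j) - p_M(j) = \sum_{i \ge 0} (-1)^i \dim_K [H^i_\fm(M)]_j ,
\]
together with the short exact sequence $0 \to H^0_\fm(M) \to M \to M^{sm} \to 0$. By \Cref{lem:modulo finite length}, $\reg_k(M) = \reg_k(M^{sm})$ for $k \ge 1$. By \eqref{eq:reg of sm}, $\reg(M^{sm}) = \reg_1(M)$. Hence
\[
\reg(M) = \max\{ e(H^0_\fm(M)), \reg(M^{sm})\},
\]
and it suffices to show $\max\{e(H^0_\fm(M)), \reg_1(M)\} = \max\{r(M)-1, \reg_1(M)\}$.

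First, the inequality $\ge$. By \Cref{prop:compare reg and reg index} (the upper bound, using $\depth M \ge 0$), one has $r(M) - 1 \le \reg(M)$. Also $\reg(M^{sm}) = \reg_1(M) \le \reg(M)$. Together these give $\reg(M) \ge \max\{r(M)-1, \reg(M^{sm})\}$.

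Next, the inequality $\le$. It is enough to show that if $e(H^0_\fm(M)) > \reg_1(M)$, then $e(H^0_\fm(M)) \le r(M)-1$. Put $j = e(H^0_\fm(M))$. For every $i \ge 1$ we have $i + e(H^i_\fm(M)) \le \reg_1(M) < j$, so $e(H^i_\fm(M)) < j - i < j$. Thus $[H^i_\fm(M)]_j = 0$ for all $i \ge 1$, and the Grothendieck--Serre formula gives $h_M(j) - p_M(j) = \dim_K [H^0_\fm(M)]_j \neq 0$. Hence $r(M) > j$, that is, $j \le r(M)-1$, as needed.

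The only real input is the Grothendieck--Serre formula, which is standard; it is the same tool behind \Cref{prop:compare reg and reg index}, which the paper already invokes. The main point to handle carefully is the degenerate case $H^0_\fm(M) = 0$. There $e = -\infty$, and $\reg(M) = \reg(M^{sm})$ already dominates $r(M)-1$ by the first step, so the formula holds. The case $\dim M = 0$, where $\reg_1(M) = -\infty$, is covered by the same argument.
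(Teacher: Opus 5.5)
Your proof is correct and follows essentially the same route as the paper. Both reduce to $\reg(M)=\max\{e(H^0_\fm(M)),\reg(M^{sm})\}$ and then apply the Grothendieck--Serre formula in degrees $j\ge \reg_1(M)$, where only $H^0_\fm(M)$ contributes; the paper splits into cases according to whether $r(M)-1\ge \reg(M^{sm})$ and computes $e(H^0_\fm(M))$ exactly in each case, whereas you prove two inequalities and take $r(M)\le \reg(M)+1$ from \Cref{prop:compare reg and reg index} for one of them.
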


\begin{proof} 
Using Equation~\eqref{eq:reg of sm}, one gets
\begin{align}
    \label{eq:rewrite reg}
\reg (M) = \max \{ e(H^0_\fm (M), \reg_1 (M) \} = \max \{ e(H^0_\fm (M), \reg (M^{sm}) \}. 
\end{align}

Local cohomology modules can be used to measure the difference between the Hilbert function and the Hilbert polynomial (see \cite[Theorem 4.3.5]{BH-book}). For any integer $j$, one has 
\[
h_M (j) - p_M (j) = \sum_{i = 0}^d (-1)^i \dim_K [H^i_\fm (M)]_j,  
\]
where $d = \dim M$. So, the cohomological characterization of the regularity gives
\begin{align}
    \label{eq:RR special}
h_M (j) - p_M (j) =  \dim_K [H^0_\fm (M)]_j \quad \text{ if } j \ge \reg_1 (M^{sm}). 
\end{align}
Hence, if $r(M) - 1 \ge \reg (M^{sm})$ the definition of the regularity index $r(M)$ implies $e(H^0_\fm (M)) = r(M) -1$. Together with Equation~\eqref{eq:rewrite reg} this shows 
$ \reg (M) = r(M) - 1 =  \max \{ r(M) - 1, \reg (M^{sm}) \}$, as desired. 

If $r(M)  \le \reg (M^{sm})$ then Equation~\eqref{eq:RR special} implies $[H^0_\fm (M)]_j = 0$ whenever $ j \ge \reg (M^{sm})$, which means $e(H^0_\fm (M)) < \reg (M^{sm})$. Now Equation~\eqref{eq:rewrite reg} yields  
$ \reg (M) = \reg (M^{sm}) =  \max \{ r(M) - 1, \reg (M^{sm}) \}$, which completes the argument.  
\end{proof}

We are ready to show the announced regularity bound. 

\begin{thm}
    \label{thm:reg-0 modules}
If $M$ is any finitely generated graded $S$-module then $\reg (M) < b_0 (M)$. 
\end{thm}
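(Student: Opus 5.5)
The natural route is \Cref{prop:compare reg}, which reduces the claim to two separate inequalities:
\[
\reg (M) = \max \{r (M) - 1, \reg (M^{sm})\}.
\]
So it suffices to prove $r(M) - 1 < b_0(M)$ and $\reg(M^{sm}) < b_0(M)$.

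The first inequality is immediate from \Cref{prop:reg index bound by Mac constants}, which gives $r(M) \le b_0(M)$.

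For the second, first dispose of the case $\dim M = 0$. Then $M = H^0_\fm(M)$, so $M^{sm} = 0$. Its regularity is $-\infty$, and the bound holds trivially. Equivalently, in this case \Cref{prop:reg index bound by Mac constants}(a) together with \Cref{prop:compare reg and reg index} already gives $\reg(M) = r(M) - 1 = b_0(M) - 1$.

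Now assume $\dim M \ge 1$. By Equation~\eqref{eq:reg of sm} we have $\reg(M^{sm}) = \reg_1(M)$. \Cref{thm:reg-1 modules} then gives $\reg_1(M) < b_1(M)$, and \Cref{prop:comp Macaulay const} gives $b_1(M) \le b_0(M)$. Chaining these, $\reg(M^{sm}) < b_0(M)$.

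Combining the two inequalities yields $\reg(M) < b_0(M)$. Since all the substantive work sits in \Cref{thm:reg-1 modules} and \Cref{prop:reg index bound by Mac constants}, I expect no real obstacle here. The only point needing care is the bookkeeping with $-\infty$ when $M^{sm} = 0$.
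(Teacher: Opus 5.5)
Your proposal is correct and matches the paper's proof: both combine \Cref{prop:compare reg} with $r(M) \le b_0(M)$ from \Cref{prop:reg index bound by Mac constants} and the chain $\reg(M^{sm}) = \reg_1(M) < b_1(M) \le b_0(M)$. Your separate handling of $\dim M = 0$ is extra care the paper leaves implicit. It is needed because \Cref{thm:reg-1 modules} is stated only for positive dimension, while in that case $M^{sm} = 0$ and the bound is trivial.
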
 

\begin{proof}
By \Cref{prop:reg index bound by Mac constants}, we know that $r(M) \le b_0 (M)$. \Cref{thm:reg-1 modules}  shows 
$\reg_1 (M) < b_1 (M)$, which is equivalent to  $\reg (M^{sm}) < b_1 (M)$. Since, by \Cref{prop:comp Macaulay const},  $b_0 (M) \ge b_1 (M)$   \Cref{prop:compare reg} proves the claim. 
\end{proof}

If $I \neq S$ is a homogeneous ideal of $S$ then the above result gives $\reg I = 1 + \reg (S/I) \le b_0 (S/I)$. In the case where the base field $K$ has characteristic zero this estimate also follows  by combining the main result of \cite{BS} (which uses different methods)  and \Cref{thm:Groebner degree bound}.

Recalling that $\reg (M) = \reg_0 (M)$, a comparison of Theorems \ref{thm:reg-1 modules} and \ref{thm:reg-0 modules} suggests an extension of these results. For establishing it, the following facts will be used.  

\begin{prop} 
   \label{prop:b_i and postive depth} 
If the base field $K$ is infinite then,    
for any  finitely generated graded $S$-module   $M$, one has: 
\begin{itemize}

\item[{\rm (a)}]  If $\ell \in [S]_1$ is any general linear form then  $b_i (M/\ell M) = b_{i+1} (M)$ for $i = 1,\ldots,\dim M -1$. 

\item[{\rm (b)}] If $\depth M \ge 1$ then $b_0 (M) = b_1 (M)$. 

\end{itemize}

\end{prop}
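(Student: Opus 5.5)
For (a), I would work with Hilbert polynomials and \Cref{prop:Hilb pol with Macaulay const}. The final assertion of that result says that $b_1,\ldots,b_{\dim M}$ are determined by $e^+$ and $p$. So it suffices to prove two things:
\begin{itemize}
\item $e^+(M/\ell M) = e^+(M)$;
\item $p_{M/\ell M}$ has the shape \eqref{eq:hilbPol with MacConstants-2} with constants $b_2(M),\ldots,b_d(M)$ and $\dim(M/\ell M) = d-1$, where $d = \dim M$.
\end{itemize}
The first point holds because a minimal generating set of $M$ maps onto one of $M/\ell M$: we have $M/\fm M \cong (M/\ell M)/\fm (M/\ell M)$. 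For the dimension, a general linear form avoids the finitely many relevant associated primes, so it is a parameter element and $\dim M/\ell M = d-1$ (for $d \ge 1$).

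For the Hilbert polynomial, I would use the exact sequence
\[
0 \to (0:_M \ell)(-1) \to M(-1) \xrightarrow{\ell} M \to M/\ell M \to 0 .
\]
For general $\ell$, the kernel $0:_M \ell$ has finite length, since $\ell$ avoids every associated prime other than $\fm$. Hence $p_{M/\ell M}(z) = p_M(z) - p_M(z-1)$. Applying this difference to \eqref{eq:hilbPol with MacConstants-2} and using Pascal's rule $\binom{w}{j}-\binom{w-1}{j} = \binom{w-1}{j-1}$ gives
\[
p_{M/\ell M}(z) = \sum_{j=1}^{d}\Big[\binom{z-e+j-2}{j-1} - \binom{z-b_j+j-2}{j-1}\Big].
\]
The $j=1$ term is $1-1=0$. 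Re-indexing with $i = j-1$ yields
\[
\sum_{i=1}^{d-1}\Big[\binom{z-e+i-1}{i} - \binom{z-b_{i+1}+i-1}{i}\Big],
\]
which is exactly the form \eqref{eq:hilbPol with MacConstants-2} with constants $b_{i+1}(M)$ and $e = e^+(M/\ell M)$. Uniqueness then gives $b_i(M/\ell M) = b_{i+1}(M)$ for $1 \le i \le d-1$. The edge cases, where $d \le 1$ makes the range empty, are vacuous.

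For (b), by \Cref{prop:reg index bound by Mac constants}(b) it suffices to show $r(M) \le b_1(M)$. Note that $\depth M \ge 1$ forces $\dim M \ge 1$. I would derive $r(M) \le b_1(M)$ from \Cref{prop:compare reg and reg index}: since $\depth M \ge 1$, we have $H^0_\fm(M) = 0$, so $\reg(M) = \reg_1(M)$ and
\[
r(M) \le \reg(M) - \depth M + 1 \le \reg_1(M).
\]
By \Cref{thm:reg-1 modules}, $\reg_1(M) < b_1(M)$. Therefore $r(M) \le \reg_1(M) \le b_1(M) - 1 < b_1(M)$, and so $b_0(M) = \max\{r(M), b_1(M)\} = b_1(M)$.

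The main obstacle is in (a): making sure that a single general $\ell$ simultaneously makes $0:_M\ell$ of finite length and lowers the dimension. This is the standard prime avoidance over an infinite field $K$, applied to the associated primes $\neq \fm$ of $M$. One also has to check that the uniqueness in \Cref{prop:Hilb pol with Macaulay const} applies to the re-indexed expression, that is, that $b_2 \ge \cdots \ge b_d > e$ still holds; this is immediate from \Cref{prop:comp Macaulay const}.
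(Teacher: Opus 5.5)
Your proposal is correct and follows the paper's proof essentially step by step. For (a), the paper likewise uses that $0:_M \ell$ has finite length for general $\ell$ to get $p_{M/\ell M}(z) = p_M(z) - p_M(z-1)$, rewrites this difference via the Macaulay-constant form of the Hilbert polynomial, notes $e^+(M/\ell M) = e^+(M)$ by Nakayama, and invokes uniqueness. For (b), it combines the comparison of $r(M)$ with $\reg(M)$, the bound $\reg_1(M) < b_1(M)$, and $b_0(M) = \max\{r(M), b_1(M)\}$, exactly as you do. The paper only uses the weaker estimate $r(M) \le 1 + \reg(M)$, which already suffices, whereas you exploit $\depth M \ge 1$ to get $r(M) \le \reg(M)$.
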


\begin{proof} 
Multiplication by $\ell$ on $M$ gives a short exact sequence
\[
0 \to (M/(0 :_M \ell))(-1) \stackrel{\times \ell}{\longrightarrow} M \to M/\ell M \to 0. 
\]
Since 
$K$ is  infinite by assumption  $0 :_M \ell$ has finite length by the generality of $\ell$. It follows that $M$ and $M/(0 :_M \ell)$ have the same Hilbert polynomial. Hence, using the above sequence and  \Cref{prop:Hilb pol with Macaulay const}, we get for the Hilbert polynomials 
\begin{align*}
p_{M/\ell M} (z) & = p_M (z) - p_M (z-1) \\
& = \sum_{j=2}^{\dim M} \bigg[\binom{z - e^+ (M)+j-2}{j-1} -  \binom{z-b_j (M)+j-2}{j-1} \bigg] \\
& = \sum_{i=1}^{\dim M -1 } \bigg[\binom{z - e^+ (M)+i-1}{i} -  \binom{z-b_{i+1} (M)+i-1}{i} \bigg].  
\end{align*}
Nakayma's lemma implies that the maximum degree of a minimal generator of $M/\ell M$ considered as an $S/\ell S$-module is equal to $e^+ (M)$. 
Using \Cref{prop:Hilb pol with Macaulay const} again, Claim (a) follows. 

For (b), combining \Cref{prop:compare reg and reg index} and \Cref{thm:reg-1 modules}, we obtain 
\[
r(M) \le 1 + \reg (M) = 1+  \reg_1 (M) \le b_1 (M), 
\]
and so $\max \{r(M), b_1 (M) \} = b_1 (M)$. Now  \Cref{prop:reg index bound by Mac constants} yields $b_0 (M) = b_1 (M)$. 
\end{proof}

One may wonder whether these statements can be extended to include $i = 0$ for (a) and Macaulay constants with larger indices for (b). For example, if $\depth M \ge 2$ one may hope that $b_0 (M) = b_1 (M) =  b_2 (M)$. However, in general this is not true.

\begin{example} 
    \label{exa:constants under hyperplane sections} 
Consider  the coordinate ring  $A = S/I_C$  of the cone $C \subset \PP^3$ over three non-collinear points in $\PP^2$, where $S = K[x_0,\ldots,x_3]$. It is Cohen-Macaulay of dimension two.  Its Hilbert function is 
\[
h_A (j) = \begin{cases}
3 j + 1 & \text{ if } j \ge 0; \\
0  & \text{ if } j < 0. 
\end{cases}
\]
Hence any 0-exact cone decomposition of $A$ is an isomorphism of $K$-vector spaces of the form
\[
A \cong x_0 K \oplus x_0^2 K \oplus x_0^3 K[x_0]  \oplus x_0^2 K[x_0, x_1] \oplus x_0 K[x_0,x_1] \oplus K[x_0, x_1], 
\]
which gives  
\[
b_0 (A) = b_1 (A) = 4 > 3 = b_2 (A). 
\]
Since $\depth A$ = 2, this shows that \Cref{prop:b_i and postive depth}(b) cannot be extended to an equality of more Macaulay constants if $M$ has large depth. 

Moreover,  if $\ell \in S$ is a general linear form then the Hilbert function of $A/\ell A$ is 
\[
h_{A/\ell A} (j) = \begin{cases}
3  & \text{ if } j \ge 2; \\
1  & \text{ if } j = 0; \\
0  & \text{ if } j < 0. 
\end{cases}
\]
Hence any 0-exact cone decomposition of $A/\ell A$ is of the form 
\[
A/\ell A \cong x_0 K  \oplus x_0^2 K[x_0]  \oplus x_0 K[x_0] \oplus K[x_0]. 
\]
Thus, 
$b_0 (A/\ell A) = b_1 (A/\ell A) = 3 < 4 = b_1 (A)$, and so the analog of \Cref{prop:b_i and postive depth}(a) is false for $i = 0$. 
\end{example}

We are ready to show  the main result of this section. It extends and subsumes Theorems \ref{thm:reg-1 modules} and \ref{thm:reg-0 modules}. 

\begin{thm}
   \label{thm:reg-i bound} 
If $M \ne 0$ is any finitely generated graded $S$-module then one has 
 \[
 \reg_i (M) < b_i (M) \quad \text{  for } i=0,1,\ldots,\dim M.  
 \]
 
 Moreover, for any integers $0 \le t \le d \le n$ and $b_t  >  b_{t+1}\ge \cdots \ge b_d > b_{d+1}$, there a is cyclic graded $S$-module $M$ satisfying $\dim M = d$, $\depth M = t$, $b_i (M) = b_i$ for $i = t,t + 1,\ldots,d+1$ and 
 \[
 \reg_i (M) = b_i (M) - 1 \quad \text{whenever } t \le i \le d. 
 \]
 For example, $M$ can be chosen as $M = (S/I) (- b_{d+1})$, where $I = I (f_t,\ldots,f_d)$ is the following ideal of $S$, 
 \begin{align}
      \label{eq:extremal ideal}
 I (f_t,\ldots,f_d) & = (\ell_{d-t},\ldots, \ell_{n-t-1}, \, f_d \ell_{0}, f_d f_{d-1} \ell_{1},\ldots, f_d \cdots f_{t+1} \ell_{d-t-1}, f_d  
 \cdots, f_{t}),  
 \end{align}
with linear forms $\ell_0,\ldots,\ell_{n-t-1}$ and non-zero homogeneous polynomials $f_t,\ldots,f_d \in S$ and $\deg f_i = b_i - b_{i+1}$  such that (as indicated)  $I (f_t,\ldots,f_d)$ has $n-t+1$ minimal generators. (Note that in the case $d = t$ the ideal $I(f_d)$ is simply defined as $I(f_d) = (\ell_{0},\ldots, \ell_{n-d-1}, f_d)$.)  

\end{thm}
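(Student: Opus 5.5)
The inequality will be proved by induction on $i$, reducing via general hyperplane sections. The cases $i=0$ and $i=1$ are \Cref{thm:reg-0 modules} and \Cref{thm:reg-1 modules}. Extending $K$ to an infinite field changes neither local cohomology nor Hilbert functions, and $e^+$ is also unchanged, so the Macaulay constants are preserved. We may therefore assume $K$ is infinite. Fix $i\ge 2$ with $i\le \dim M$.

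First pass to $M^{sm}$. By \Cref{lem:modulo finite length}, $\reg_i(M)=\reg_i(M^{sm})$. Since $i\ge 1$, the constant $b_i$ depends only on $p_M$ and $e^+$ (\Cref{prop:Hilb pol with Macaulay const}). After replacing $M$ by $M_{\ge e^+(M)}$, which is allowed by \Cref{lem:Macaulay cont for direct sum}(a), we have $e^+(M^{sm})=e^+(M)$, and so $b_i(M^{sm})=b_i(M)$. We may therefore assume $\depth M\ge 1$.

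Take a general linear form $\ell$; it is then $M$-regular. The sequence $0\to M(-1)\to M\to M/\ell M\to 0$ gives exact pieces $H^{j-1}_\fm(M/\ell M)_{k}\to H^j_\fm(M)_{k-1}\to H^j_\fm(M)_k$. The standard argument (descending from large degrees, where the local cohomology vanishes) yields $\reg_{j}(M)\le \reg_{j-1}(M/\ell M)$ for all $j\ge 1$. Now $M/\ell M$ is an $S/\ell S$-module of dimension $\dim M-1$, and by \Cref{prop:b_i and postive depth}(a), $b_{i-1}(M/\ell M)=b_i(M)$ for $i-1\ge 1$. Induction gives $\reg_i(M)\le \reg_{i-1}(M/\ell M)<b_{i-1}(M/\ell M)=b_i(M)$. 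The range $i-1\le \dim M/\ell M$ holds since $i\le \dim M$.

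For sharpness, by \Cref{lem:constants and shifting} we may take $b_{d+1}=0$ and $M=S/I$. First compute the Hilbert polynomial of $S/I(f_t,\dots,f_d)$. Modulo the linear forms $\ell_{d-t},\dots,\ell_{n-t-1}$, we work in a polynomial ring in $d-t+1$... variables. The ideal then has a filtration of the form $S/(f_d\ell_0,\dots)$ built from successive colon sequences: $S/I$ has a filtration with factors $S/(\ell_0,\ldots,\ell_{j-1},f_{d-j})$ twisted by $-\deg(f_d\cdots f_{d-j+1})$, which are complete intersections of dimension $d-j$. From this we read off $p_{S/I}$ and match it to \eqref{eq:hilbPol with MacConstants-2}, giving $b_i(S/I)=\deg f_i+\dots+\deg f_d=b_i$ for $i\ge t$. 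The depth equals $t$ because the last factor $S/(\ell_0,\dots,\ell_{d-t-1},f_t)$ has dimension $t$ and the filtration is by CM modules of decreasing dimension. Then $H^i_\fm$ for $t\le i\le d$ comes only from the factor of dimension $i$. Its top local cohomology ends at degree $\deg(f_d\cdots f_{i+1})+\deg f_i-i-1$, which gives $\reg_i=b_i-1$. The strict inequality $b_t>b_{t+1}$ ensures that $f_t$ is nonconstant.

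I expect the main obstacle to be verifying that the filtration's connecting maps do not cancel the top cohomology. I would handle this by using the long exact sequences together with the vanishing $H^{i}_\fm$ of the lower-dimensional factors in the relevant degrees.
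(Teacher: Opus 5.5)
Your proof of the inequalities $\reg_i(M)<b_i(M)$ is essentially the paper's: a general linear form, $\reg_i(M)\le\reg_{i-1}(M/\ell M)$, \Cref{prop:b_i and postive depth}(a), and induction. The only difference is that you first pass to $M_{\ge e^+(M)}$ and then to $M^{sm}$ so that $\ell$ becomes regular. The paper instead works directly with $0\to (M/(0:_M\ell))(-1)\to M\to M/\ell M\to 0$. Both routes work; for yours, you should also note that truncation does not change $H^i_\fm$ for $i\ge 2$.

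For sharpness you take a different route. The paper builds a $0$-exact cone decomposition of $S/I$ inductively and quotes an external lemma for the cohomology. You instead use a filtration whose factors $C_i$ are twisted complete intersections of dimension $i$. That route is reasonable, and the obstacle you anticipate does not arise. The nonzero $C_i$ are Cohen--Macaulay of pairwise distinct dimensions, so every long exact sequence degenerates and $H^i_\fm(S/I)\cong H^i_\fm(C_i)$ for all $i$. This gives $\depth S/I=t$ and $\reg_i(S/I)=b_i-1$ for $t\le i\le d$ at once.

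There is, however, a genuine gap when $t=0$. You obtain $b_i(S/I)=b_i$ for $i\ge t$ by matching $p_{S/I}$ with \eqref{eq:hilbPol with MacConstants-2}. But by \Cref{prop:Hilb pol with Macaulay const}, the Hilbert polynomial together with $e^+$ determines only $b_1,\dots,b_d$. The factor $C_0$, the only one involving $f_0$, has finite length and contributes nothing to $p_{S/I}$; for $d=0$ the Hilbert polynomial is even zero. So the claim $b_0(S/I)=b_0$ is not proved. It is part of the statement, and it is needed to conclude $\reg_0(M)=b_0(M)-1$. The paper gets it from the zero-dimensional cones of degrees $b_1,\dots,b_0-1$ in its explicit decomposition.

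Within your framework you can close the gap with \Cref{prop:reg index bound by Mac constants}. Your computation gives $e(H^0_\fm(S/I))=b_0-1$. It also gives $e(H^i_\fm(S/I))\le b_i-i-1<b_0-1$ for $i\ge 1$, since $b_0>b_1$. The identity $h_M(j)-p_M(j)=\sum_i(-1)^i\dim_K[H^i_\fm(M)]_j$ then shows $r(S/I)=b_0$. Hence $b_0(S/I)=\max\{r(S/I),b_1(S/I)\}=b_0$, or $b_0(S/I)=r(S/I)=b_0$ if $d=0$.

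A minor slip: after killing $\ell_{d-t},\dots,\ell_{n-t-1}$ you are in a polynomial ring in $d+1$ variables, not $d-t+1$.
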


\begin{proof}
First, we establish the estimates. 
Since $k$-regularities and Macaulay constants do not change when on extends the base field $K$ we may assume that $K$ is infinite. 
If $i = 0$ 
the claimed inequality is  true by  \Cref{thm:reg-0 modules}. 
Thus,  it suffices to consider $i \ge 1$,  and we may assume $d = \dim M \ge 1$. We use induction on $d$. 
If $d = 1$ we are done by \Cref{thm:reg-1 modules}. Let $\ell \in S$ be a general linear form.  The long exact cohomology sequence to the  exact sequence
\[
0 \to (M/(0 :_M \ell))(-1) \stackrel{\times \ell}{\longrightarrow} M \to M/\ell M \to 0 
\]
implies, for any integer $k \ge 1$, that $\reg_k (M) \le \reg_{k-1} (M/\ell M)$ (see, e.g., \cite[Lemma 2]{N-90}). 
Hence, if $i \ge 2$ we get by applying the induction hypothesis to $M/\ell M$, 
\begin{align*}
\reg_i (M) & \le \reg_{i-1} (M/\ell M) < b_{i-1} (M/\ell M) = b_i (M), 
\end{align*}
where we used \Cref{prop:b_i and postive depth} for the final equality. For $i = 1$, the desired estimate is true by  \Cref{thm:reg-1 modules}.

Second, we prove the claimed simultaneous sharpness of the bounds. Using \Cref{lem:constants and shifting} and passing to $M(-b_{d+1})$, it is enough to consider the case where $b_{d+1} = 0$. Then we have $b_{d+1} (S/I) = e^+ (S/I) = 0 = b_{d+1}$, as desired. 
We now determine the other Macaulay constants of $S/I$. Using induction on $d \ge 0$, we show that $S/I$ has a  $0$-exact cone decomposition of the form
\begin{align}
   \label{eq:cone decomp of extremal} 
S/ I (f_t,\ldots,f_d) \cong \bigoplus_{j= t}^d \  \bigoplus_{k = b_j+1}^{b_j - 1} x_{d-j}^k K[x_{d-j+1},\ldots,x_d], 
\end{align} 
and so $b_i (S/I) = b_i$ whenever $t \le i \le d$. 
If $t = d$ then  $I = I(f_d) = (\ell_{0},\ldots, \ell_{n-d-1}, f_d)$. Hence,  \Cref{exa:Mac constants principal} implies the desired cone decomposition as $b_0 = b_0 - b_1  = \deg f_0$.

Assume $t < d$ and set 
\[
\widetilde{d} = \max \{ i \le d-1 \; \mid \; a_i \ge 1\}. 
\]
Since $a_d$ and $a_t = b_t - b_{t+1}$ are positive by assumption, one has $t \le \widetilde{d} \le d-1$. We may assume that $f_{d-1} = \cdots = f_{\widetilde{d} + 1} = 1$. it follows that 
\begin{align*}
 I (f_t,\ldots,f_d) : f_d \\
& \hspace{-2.7cm} = (\ell_{d-t},\ldots, \ell_{n-t-1}, \, \ell_0,\ldots,\ell_{d- \tilde{d} -1},  \, 
 f_{\tilde{d}} \ell_{d - \tilde{d}}, f_{\tilde{d}} f_{\tilde{d}-1} \ell_{d - \tilde{d} +1},\ldots, f_{\tilde{d}} \cdots f_{t+1} \ell_{\tilde{d}-t-1}, f_d   \cdots, f_{t}). 
\end{align*}
The right-hand side is an ideal of the form $I (f_t,\ldots,f_{\tilde{d}}$. Thus, we a short exact sequence 
\[
0 \to (S/I (f_t,\ldots,f_{\tilde{d}} )(-a_d) \to S/I (f_t,\ldots,f_d) \to S/I(f_d) \to 0. 
\]
By the choice of $\widetilde{d}$, we have $\deg f_{\tilde{d}} \ge 1$. Hence, the induction hypothesis applies to the $\widetilde{d}$-dimensional algebra $S/I (f_t,\ldots,f_{\tilde{d}})$ and gives a 0-exact cone decomposition of the form 
\[
S/I (f_t,\ldots,f_{\tilde{d}})  \cong \bigoplus_{j= t}^{\tilde{d}} \  \bigoplus_{k = b_j+1}^{b_j - 1} x_{\tilde{d}-j}^k K[x_{\tilde{d}-j+1},\ldots,x_{\tilde{d}}]
\]
We know a 0-exact cone decomposition of  $S/I(f_d)$ by \Cref{exa:Mac constants principal}. Thus, the above exact sequence implies the claimed cone decomposition of $S/I (f_t,\ldots,f_d)$ by taking also into account that 
\[
[(x_i^e K[U]) (-s)]_j = x_i^e [K[U]]_{j - e - s} = [x_i^{e+s} k[U]]_j 
\]
for any integers $e$ and $s$ and any subset $U$ of the set of variables $\{x_0,\ldots,x_n\}$. Thus, we have shown $b_i (S/I) = b_i$ if $t \le i \le d$. 

By \cite[Lemma 3.4]{N-04}, we get for the local cohomology modules of $A = S/I (f_t,\ldots,f_d)$ that 
\[
H^i_{\fm} (A)^\vee \cong \begin{cases}
(S/(\ell_{0},\ldots, \ell_{n-i-1}, f_i))(b_i - i - 1) & \text{ if } t \le i \le d; \\
0 & \text{ otherwise}. 
\end{cases}
\]
It follows that $e(H^i_{\fm} (A)) = b_i -i -1$ if $H^i_{\fm} (A) \neq 0$, which implies $\reg_i (A) = b_i - 1 = b_i (A) - 1$ if $t \le i \le d$, completing the argument. 
\end{proof}

\begin{remark}
   \label{rem:Mac constants of extremal ideals} 
(i) Using the notation of \Cref{thm:reg-i bound}, the cone decomposition \ref{eq:cone decomp of extremal} gives that 
\[
b_i (S/I (f_t,\ldots,f_d)) = b_t \quad \text{whenever }  0 \le i \le t. 
\]

(ii) Combined with \Cref{lemdef:Macaulay constants},  \Cref{thm:reg-i bound} shows that, for any finitely generated graded $S$-module $M$, an upper bound for its Castelnuovo-Mumford regularity and, more generally, for any $i$-regularity of $M$ can be read off from the knowledge of the Hilbert function of $M$. 
\end{remark}

Using the first observation, it follows that the estimates on the $i$-regularity in \Cref{thm:reg-i bound} are also sharp if $b_t = b_{t+1}$. 

\begin{cor}
    \label{cor:sharpness} 
For any integers $0 \le t \le d \le n$ and $b_t  \ge  b_{t+1}\ge \cdots \ge b_d > b_{d+1}$, there is a cyclic graded $S$-module $M$ satisfying $\dim M = d$,  $b_i (M) = b_i$ for $i = t,t + 1,\ldots,d+1$ and 
 \[
 \reg_i (M) = b_i (M) - 1 \quad \text{whenever } t \le i \le d. 
 \]
\end{cor}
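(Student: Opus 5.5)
The plan is to reduce to \Cref{thm:reg-i bound} by lowering the index $t$ until the strict inequality at the bottom holds. Given $b_t \ge b_{t+1} \ge \cdots \ge b_d > b_{d+1}$, let
\[
s = \max\{ i \; \mid \; t \le i \le d \text{ and } b_i > b_{i+1}\}
\]
be the largest index in $[t,d]$ at which the sequence strictly drops. This set contains $d$, so $s$ is defined. Then $b_s > b_{s+1} \ge \cdots \ge b_d > b_{d+1}$, and $0 \le s \le d \le n$. So \Cref{thm:reg-i bound} applied with $s$ in place of $t$ gives a cyclic module $M = (S/I(f_s,\ldots,f_d))(-b_{d+1})$ with $\dim M = d$. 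It satisfies $b_i(M) = b_i$ and $\reg_i(M) = b_i(M) - 1$ for $s \le i \le d$, and $b_{d+1}(M) = b_{d+1}$.

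\emph{Wait}: $s$ must be chosen as the smallest index. Define instead $s = \min\{ i \ge t \; \mid \; b_i > b_{i+1}\}$, so that $b_t = b_{t+1} = \cdots = b_s > b_{s+1}$. That is the configuration in which the low end of the chain is constant, and it is the one \Cref{rem:Mac constants of extremal ideals}(i) is designed for. With this $s$, the inequalities $b_s > b_{s+1} \ge \cdots \ge b_d > b_{d+1}$ still hold, so \Cref{thm:reg-i bound} applies with $t$ replaced by $s$ and settles all indices $i$ with $s \le i \le d$.

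It remains to treat $t \le i < s$, where $b_i = b_s$. By \Cref{rem:Mac constants of extremal ideals}(i), transported through the shift of \Cref{lem:constants and shifting}, one has $b_i(M) = b_s(M) = b_s = b_i$ for $0 \le i \le s$. For the regularities, on one side $\reg_i(M) \ge \reg_s(M) = b_s - 1$, because the $k$-regularities are non-increasing in $k$. On the other side, $\reg_i(M) < b_i(M) = b_s$ by the first part of \Cref{thm:reg-i bound}. Together these give $\reg_i(M) = b_s - 1 = b_i(M) - 1$, which completes the range $t \le i \le d$.

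The main point needing care is justifying \Cref{rem:Mac constants of extremal ideals}(i). Concretely, one has to check that the $0$-exact cone decomposition \eqref{eq:cone decomp of extremal} has no zero-dimensional cones and has top cone degree $b_s - 1$. This gives $b_0 = \cdots = b_s$, and it holds because $S/I$ has depth $s$, so its cones of dimension less than $s$ are absent. Also note that $\dim M = d$ is preserved. Since the remark is stated earlier in the paper, I would simply invoke it.
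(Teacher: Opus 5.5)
Your final argument is correct and follows the paper's proof. Your corrected choice $s=\min\{i\ge t \mid b_i>b_{i+1}\}$ is exactly the paper's $\tilde t=\max\{i \mid b_i=b_t\}$, and the one variation is harmless: for $t\le i<s$ you get $\reg_i(M)=b_s-1$ by sandwiching between $\reg_s(M)$ and the upper bound $\reg_i(M)<b_i(M)$, whereas the paper reads off $\reg_i(M)=\reg_s(M)$ from $\depth M=s$.

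Do drop your closing justification of \Cref{rem:Mac constants of extremal ideals}(i), though. Depth $s$ does not by itself exclude cones of dimension $<s$: the depth-$2$ algebra $A$ of \Cref{exa:constants under hyperplane sections} has $0$- and $1$-dimensional cones in its $0$-exact decomposition. The remark comes instead from the explicit decomposition \eqref{eq:cone decomp of extremal}, all of whose cones have dimension between $s$ and $d$.
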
 

\begin{proof}
Set $\tilde{t} = \max \{i \; \mid \; b_i = b_t \}$. Note that $\tilde{t} \le d$ as $b_d > b_{d+1}$ by assumption. The definition of $\tilde{t}$ implies $b_{\tilde{t}} > b_{\tilde{t} + 1}$. Hence, \Cref{thm:reg-i bound} shows that there is a cyclic $S$-module $M$ satisfying $\dim M = d$, $\depth M = \tilde{t}$, $b_i (M) = b_i$ for $i = \tilde{t},\ldots,d+1$ and 
 \[
 \reg_i (M) = b_i (M) - 1 \quad \text{whenever } \tilde{t} \le i \le d. 
 \]
Since $\depth M = \tilde{t}$ we get $\reg_i (M) = \reg_{\tilde{t}} (M) = b_{\tilde{t}}$ if $t \le i \le \tilde{t}$. By \Cref{rem:Mac constants of extremal ideals}, one has $b_i (M) = b_i = b_{\tilde{t}}$ if $t \le i \le \tilde{t}$, where the second equality is true by the definition of $\tilde{t}$. This completes the argument.  
\end{proof}

We close out this section with a consequence for sheaves. Let $\Fc \neq 0$ denote a coherent sheaf on $\PP^n = \Proj (S)$. We write 
$H^i (\Fc (j))$ for the $i$-th sheaf cohomology $H^i (\PP^n, \Fc (j))$ and  $h^i (\Fc (j))$  for $\dim_K H^i (\Fc (j))$. Following Mumford (see \cite[Lecture 14]{Mum}), the Castelnuovo-Mumford regularity of $\Fc$ is defined as 
\[
\reg \Fc = \min \{ j \in \Z \; \mid \; H^i (\Fc (j-i)) = 0 \text{ whenever } i \ge 1\}. 
\]

\begin{cor}
   \label{cor:reg sheaves}
If $\Fc$ is a coherent sheaf on $\PP^n$ of dimension $d-1 \ge 1$ that is generated by its global sections then there are uniquely determined integers $b_1 \ge \cdots \ge b_d >  0$ such that one has, for every integer $j \gg 0$, 
\begin{align}
    \label{eq:global sections}
h^0 (\Fc (j)) =  \sum_{j=1}^d \bigg[\binom{z +j-1}{j} -  \binom{z-b_j+j-1}{j} \bigg].
\end{align}
The Castelnuovo-Mumford regularity regularity of $\Fc$ satisfies 
\[
\reg (\Fc ) < b_2.   
\]

 Moreover, for any integers $2 \le  d \le n$ and $b_2  \ge \cdots \ge b_d > 0$, there a is a projective subscheme $Y$ of $\PP^n$ such that $\Fc = \mathcal{O}_Y$ satisfies Equation \eqref{eq:global sections} and 
 \[
 \reg (\Fc ) = b_2 - 1.  
 \]
\end{cor}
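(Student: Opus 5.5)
The idea is to pass from the sheaf $\Fc$ to the graded module of twisted global sections and then apply \Cref{thm:reg-i bound} with $i=2$. Since $\Fc$ is generated by its global sections, the module
\[
\Gamma_*(\Fc) = \bigoplus_{j \in \Z} H^0(\Fc(j))
\]
need not be finitely generated. I will therefore work with the submodule $M \subseteq \Gamma_*(\Fc)$ generated by $H^0(\Fc)$. It is a finitely generated graded $S$-module with $e^+(M) = 0$ and $\widetilde{M} = \Fc$; the second property uses global generation.

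Since $\widetilde{M} = \Fc$, for $j \gg 0$ one has $h_M(j) = h^0(\Fc(j))$, because $M$ and $\Gamma_*(\Fc)$ agree in large degrees. Moreover, $\dim M = \dim \Fc + 1 = d$. By \Cref{prop:Hilb pol with Macaulay const} with $e^+(M) = 0$, the Hilbert polynomial of $M$ has exactly the displayed form with $b_j = b_j(M)$. These satisfy $b_1 \ge \cdots \ge b_d > 0$ by \Cref{prop:comp Macaulay const}. Uniqueness of the $b_j$ follows from the last assertion of \Cref{prop:Hilb pol with Macaulay const}: each summand is zero or of degree $j-1$, so the $b_j$ are recovered one at a time from the top coefficient downwards.

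For the bound, I use the standard comparison $H^i(\Fc(j)) \cong [H^{i+1}_\fm(M)]_j$ for $i \ge 1$. Then
\[
\reg \Fc = \max\{ i + 1 + e(H^{i+1}_\fm(M)) \mid i \ge 1\} - 1 = \reg_2(M) - 1,
\]
where some care is needed with the convention when all these groups vanish. Since $d \ge 2$, \Cref{thm:reg-i bound} gives $\reg_2(M) < b_2(M) = b_2$. This yields $\reg \Fc \le b_2 - 2 < b_2$, which is at least as strong as the claim. I will double-check the indexing shift here, since the main obstacle is exactly the bookkeeping between $\reg \Fc$ and $\reg_2$ or $\reg_1$ of $M$. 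If the precise relation is instead $\reg \Fc = \reg_2(M)$, the inequality $\reg \Fc < b_2$ still follows, and sharpness then requires $\reg_2(M) = b_2 - 1$.

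For sharpness, take $t = 2$ in \Cref{cor:sharpness}. For the given integers $b_2 \ge \cdots \ge b_d > b_{d+1} = 0$, let $A = S/I$ be the cyclic module from \Cref{thm:reg-i bound} or \Cref{cor:sharpness}. I choose $b_2 > b_3$ when possible; otherwise I use the $\tilde t$ reduction in the proof of \Cref{cor:sharpness}. The module $A$ has depth at least $2$ and satisfies $\reg_2(A) = b_2 - 1$. Put $Y = \Proj A$ and $\Fc = \mathcal{O}_Y$. Because $\depth A \ge 2$, we have $A = \Gamma_*(\mathcal{O}_Y)$, so $h^0(\mathcal{O}_Y(j)) = h_A(j)$ has the required form, with $b_1(A) = b_2$ by \Cref{rem:Mac constants of extremal ideals}. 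The cohomology comparison then gives $\reg \mathcal{O}_Y$ in terms of $\reg_2(A) = b_2 - 1$. I expect this to match the claimed value $b_2 - 1$ once the index convention used above is fixed correctly.
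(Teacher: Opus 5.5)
\textbf{The gap: an off-by-one error in $\reg\Fc$.} The genuine gap is the bookkeeping step you flag yourself, and the formula you actually write down is wrong. Use $H^i(\Fc(j))\cong[H^{i+1}_\fm(M)]_j$ for $i\ge 1$ together with Mumford's lemma ($m$-regular implies $(m+1)$-regular). Then $\Fc$ is $m$-regular if and only if $H^i(\Fc(j))=0$ for all $j\ge m-i$ and all $i\ge 1$. Equivalently, $m\ge i+1+e(H^{i+1}_\fm(M))$ for all $i\ge 1$. Hence
\[
\reg \Fc=\max\{k+e(H^k_\fm(M)) \mid k\ge 2\}=\reg_2(M),
\]
with no ``$-1$''. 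Your claimed strengthening $\reg\Fc\le b_2-2$ is false. For $\Fc=\mathcal{O}_{\PP^n}$ with $n\ge 1$, one has $M=S$, a single $(n+1)$-dimensional cone, so $b_2(S)=1$, whereas $\reg\mathcal{O}_{\PP^n}=0=\reg_2(S)$. That strengthening would also contradict the sharpness half of the statement. This is why your last paragraph cannot be closed as written: a proof cannot leave both conventions open.

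\textbf{With the correction, your argument is complete.} Once you commit to $\reg\Fc=\reg_2(M)$, everything else you wrote goes through. You get $\reg\Fc=\reg_2(M)<b_2(M)=b_2$ by \Cref{thm:reg-i bound}. For the extremal algebra $A$ with $\depth A=\tilde{t}\ge 2$, you get $\reg\mathcal{O}_Y=\reg_2(A)=\reg_{\tilde{t}}(A)=b_{\tilde{t}}-1=b_2-1$.

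\textbf{Comparison with the paper.} Your corrected route differs from the paper's in a useful way. The paper takes $M=\bigoplus_{j\ge 0}H^0(\Fc(j))$ and asserts $e^+(M)=0$. It then bounds $\reg M$ using $H^0_\fm(M)=0$ and $[H^1_\fm(M)]_j=0$ for $j\ge 0$, and concludes with $\reg\Fc\le\reg M$. However, that module need not be generated in degree $0$. For the rational quartic curve $C\subset\PP^3$, $\dim_K H^0(\mathcal{O}_C(1))=5$, while $[S]_1\cdot H^0(\mathcal{O}_C)$ has dimension $4$. Your choice $M=S\cdot H^0(\Fc)$ has $e^+(M)=0$ and $\widetilde M\cong\Fc$ automatically. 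In exchange, your $M$ can have $[H^1_\fm(M)]_j\ne 0$ for some $j\ge 0$, so the paper's detour through $\reg M$ would not work for it. The direct identity $\reg\Fc=\reg_2(M)$ is exactly what your route needs, so it must be stated correctly.
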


\begin{proof}
Define a graded $S$-module $M$ as $M = \oplus_{j \ge 0} H^0 (\Fc (j))$. It is a finitely generated $S$-module of dimension $d$ whose sheafification is $\widetilde{M} \cong \Fc$. The assumption on global sections implies that $e^+ (M) = 0$. Hence, by \Cref{thm:char hilb pol}, the Hilbert polynomial of $M$ can be uniquely written as 
\[
p_M (j) = \sum_{j=1}^d \bigg[\binom{z +j-1}{j} -  \binom{z-b_j+j-1}{j} \bigg], 
\]
where the integers $b_1 \ge \cdots \ge b_d >  0$ are the Macaulay constants $b_i = b_i (M)$ of $M$. 
By definition of $M$, one has $h^0 (\Fc (j)) = h^0 (\widetilde{M} (j))$ if $j \ge 0$. Hence $h^0 (\Fc (j)) = h_M (j) = p_M (j)$ if $j \gg 0$, which proves the first statement. 

Consider the exact sequence of graded $S$-modules (see, e.g., \cite[Corollary A1.12]{Eis05}), 
\[
0 \to H^0_{\fm} (M) \to M \to \oplus_{j \in Z} H^0 (\Fc (j)) \to H^1_{\fm} (M) \to 0. 
\]
By definition of $M$, the map in the middle is the identity in each degree $j \ge$. Since the initial degree of $M$ is zero, it follows that $H^0_{\fm} (M) = 0$ and $[H^1_{\fm} (M)]_j = 0$ if $j \ge 0$. 

Hence $\reg_2 (M) < b_2$ with $b_2 > e^+ (M) = 0$ (see \Cref{prop:comp Macaulay const}) implies $\reg (M) < b_2$. Now, \cite[Proposition 4.16]{Eis05}  gives $\reg (\Fc) < b_2$. 

Finally, denote by $Y$ the subscheme of $\PP^n$ defined by an ideal  $I (f_2,\ldots,f_d)$  as in \Cref{thm:reg-i bound}  with non-zero homogeneous polynomials $f_2,\ldots,f_d \in S$, where  $\deg f_i = b_i - b_{i+1}$. By \Cref{thm:reg-i bound}  and the proof of \Cref{cor:sharpness}, the sheaf $\Fc = \mathcal{O}_Y$ satisfies Equation \eqref{eq:global sections} and $ \reg (\Fc ) = b_2 - 1$.  
\end{proof}

Using \Cref{lem:compare hilb} and the notation of the above result, \cite[Proposition 4.1]{Dellaca} may be interpreted as showing that $\reg \Fc \le b_1$.  \Cref{cor:reg sheaves} is a strengthening of this estimate as $b_2 \le b_1$.  Notice that, by \Cref{thm:reg-i bound}, $b_2 - b_1$ can be arbitrarily large. 


\end{document}